\def\bbordermatrix#1{\begingroup \m@th
  \@tempdima 4.75\p@
  \setbox\z@\vbox{%
    \def\cr{\crcr\noalign{\kern2\p@\global\let\cr\endline}}%
    \ialign{$##$\hfil\kern2\p@\kern\@tempdima&\thinspace\hfil$##$\hfil
      &&\quad\hfil$##$\hfil\crcr
      \omit\strut\hfil\crcr\noalign{\kern-\baselineskip}%
      #1\crcr\omit\strut\cr}}%
  \setbox\tw@\vbox{\unvcopy\z@\global\setbox\@ne\lastbox}%
  \setbox\tw@\hbox{\unhbox\@ne\unskip\global\setbox\@ne\lastbox}%
  \setbox\tw@\hbox{$\kern\wd\@ne\kern-\@tempdima\left[\kern-\wd\@ne
    \global\setbox\@ne\vbox{\box\@ne\kern2\p@}%
    \vcenter{\kern-\ht\@ne\unvbox\z@\kern-\baselineskip}\,\right]$}%
  \null\;\vbox{\kern\ht\@ne\box\tw@}\endgroup}
\newtheorem{theorem}{Theorem}[section]
\newtheorem*{corollaryconj*}{Corollary of Conjecture \ref{conjecture:Signature}}
\theoremstyle{definition}
\newtheorem{example}[theorem]{Example}
\newtheorem{question}[theorem]{Question}
\theoremstyle{remark}
\numberwithin{equation}{section}
\def\MRI{\operatorname{MRI}}
\def\Reg{\operatorname{Reg}}
\begin{document}

\tikzset{invclip/.style={clip,insert path={{[reset cm]
      (-16383.99999pt,-16383.99999pt) rectangle (16383.99999pt,16383.99999pt)
    }}}}

\title{The multi-region index of a knot}

\author[S. Goodhill]{Sarah Goodhill}
\address{Department of Mathematics and Statistics\\
Vassar College\\
Poughkeepsie, NY}
\email{sgoodhill@vassar.edu}

\author[A. M. Lowrance]{Adam M. Lowrance}
\address{Department of Mathematics and Statistics\\
Vassar College\\
Poughkeepsie, NY} 
\email{adlowrance@vassar.edu}

\author[V. Munoz Gonzales]{Valeria Munoz Gonzales}
\address{Department of Mathematics and Statistics\\
Vassar College\\
Poughkeepsie, NY}
\email{vmunozgonzales@vassar.edu}

\author[J. Rattray]{Jessica Rattray}
\address{Department of Mathematics and Statistics\\
Vassar College\\
Poughkeepsie, NY}
\email{jrattray@vassar.edu}

\author[A. Zeh]{Amelia Zeh}
\address{Department of Mathematics and Statistics\\
Vassar College\\
Poughkeepsie, NY}
\email{azeh@vassar.edu}

\thanks{This paper is the result of a summer research project in Vassar College’s Undergraduate Research Science Institute
(URSI). The second author is supported by NSF grant DMS-1811344.}

\subjclass{}
\date{}

\begin{abstract}
Using region crossing changes, we define a new invariant called the multi-region index of a knot. We prove that the multi-region index of a knot is bounded from above by twice the crossing number of the knot. In addition, we show that the minimum number of generators of the first homology of the double branched cover of $S^3$ over the knot is strictly less than the multi-region index. Our proof of this lower bound uses Goeritz matrices.
\end{abstract}

\maketitle

\section{Introduction}
A \textit{region} of a knot diagram $D$ is a connected component of $S^2-|D|$ where $|D|$ is the $4$-regular graph obtained by forgetting the crossing information of $D$. A \textit{region crossing change} at a region $R$ of a knot diagram $D$ is the transformation that changes all of the crossings in the boundary of $R$; see Figure \ref{figure:RegionChange}. Shimizu \cite{Shimizu} proved that any knot diagram $D$ has a set of regions such that performing a region crossing change on all the regions in the set results in a diagram of the unknot, and thus region crossing change is an unknotting operation. However, region crossing change is not an unlinking operation, as one can see when considering the standard diagram of the Hopf link. In this standard diagram, each of the four regions have both crossings in their boundary, and thus changing any region flips the diagram to its mirror image. Cheng and Gao \cite{CG} and Cheng \cite{Cheng} classified the link diagrams for which region crossing change is an unlinking diagram. Dasbach and Russell \cite{DR} examined region crossing changes for link diagrams on oriented surfaces. There are even games played on knot diagrams where the moves each player makes are region crossing changes \cite{Game2,Game}.

There are several ways to measure the difficulty of unknotting a knot using region crossing changes. Aida \cite{Aida} showed that every knot has a diagram such that a single region crossing change transforms the diagram into the unknot, and so just counting the minimum over all diagrams of the number of region crossing changes needed to transform a knot into the unknot is not an interesting invariant. Shimizu \cite{Shimizu} defined the \textit{region unknotting number} of a knot $K$ to be the minimum number of region crossing changes in any minimal crossing diagram of $K$ needed to transform the diagram into a diagram of the unknot. In another direction, Kawauchi, Kishimoto, and Shimizu \cite{KKS} defined the \textit{region index} of a knot to be the fewest number of crossings in any region $R$ of a diagram of $K$ where performing a region crossing change on $R$ results in the unknot. 

In this paper, we define another invariant that measures the difficulty of unknotting via region crossing changes. Suppose that the set of regions of $D$ is $\mathcal{R}=\{R_0,\dots,R_n\}$, and that region $R_i$ has $c(R_i)$ crossings in its boundary for $i=0,\dots, n$. A \textit{set of unknotting regions} in a knot diagram $D$ is a set of regions of $D$ such that performing region crossing changes on all of the regions results in a diagram of the unknot. The \textit{multi-region index} $\MRI(D)$ of the knot diagram $D$ is defined as \begin{equation}
    \label{equation:MRI(D)}
\MRI(D)=\min\left\{\sum_{j={1}}^{k} c(R_{i_j})~:~\{R_{i_1},\dots,R_{i_k}\}\text{ is a set of unknotting regions of }D \right\},
\end{equation}
that is the multi-region index of $D$ is the minimum total number of crossings changed in any set of unknotting regions of $D$ where a crossing is counted multiple times if it appears in the boundary of multiple regions in the set of unknotting regions. The \textit{multi-region index} $\MRI(K)$ of a knot $K$ is the minimum of $\MRI(D)$ over all diagrams $D$ of $K$.

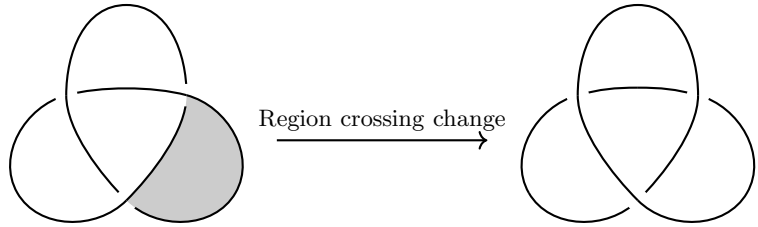
\begin{figure}[h!]
\[\begin{tikzpicture}[outer sep = 0mm, scale = .4]

\def\gap{.15cm}

\coordinate (A) at (-3.625,.75);
\coordinate (B) at (-2,4);
\coordinate (C) at (0,7);
\coordinate (D) at (0,.5);
\coordinate (E) at (2,4);
\coordinate (F) at (3.625,.75);

\fill[white!80!black] (E) to [out = -15, in = 60] (F) to [out = 240, in=-45] (D) to [out = 45, in = 270, looseness = .75] (E);

\begin{scope}
	\begin{pgfinterruptboundingbox} 
		\path [invclip] (E) circle (\gap);
	\end{pgfinterruptboundingbox}
	\draw[thick] (E) to [out = 90, in = 0] (C) to [out = 180, in = 90] (B);
\end{scope}

\begin{scope}
\begin{pgfinterruptboundingbox} 
		\path [invclip] (D) circle (\gap);
	\end{pgfinterruptboundingbox}
	\draw[thick] (B) to [out = 270, in = 135, looseness = .75] (D);
\end{scope}

\begin{scope}
	\begin{pgfinterruptboundingbox} 
		\path [invclip] (E) circle (\gap);
	\end{pgfinterruptboundingbox}
	\draw[thick] (E) to [out = 270 , in = 45, looseness=.75] (D);
\end{scope}

\begin{scope}
	\begin{pgfinterruptboundingbox} 
		\path [invclip] (B) circle (\gap);
	\end{pgfinterruptboundingbox}
	\draw[thick] (D) to [out = 225 , in = 300] (A) to [out = 120, in = 195] (B);
\end{scope}

\begin{scope}
	\begin{pgfinterruptboundingbox} 
		\path [invclip] (B) circle (\gap);
	\end{pgfinterruptboundingbox}
	\draw[thick] (B) to [out =15, in =165, looseness=.75] (E);
\end{scope}

\begin{scope}
	\begin{pgfinterruptboundingbox} 
		\path [invclip] (D) circle (\gap);
	\end{pgfinterruptboundingbox}
	\draw[thick] (E) to [out =-15, in = 60] (F) to [out = 240,in= -45] (D);
	\end{scope}

\draw[thick,->] (5,2.5) -- (12,2.5);
\draw (8.5,2.5) node[above]{\footnotesize{Region crossing change}};
	
\begin{scope}[xshift = 17cm]

\coordinate (A) at (-3.625,.75);
\coordinate (B) at (-2,4);
\coordinate (C) at (0,7);
\coordinate (D) at (0,.5);
\coordinate (E) at (2,4);
\coordinate (F) at (3.625,.75);

\begin{scope}
	\draw[thick] (E) to [out = 90, in = 0] (C) to [out = 180, in = 90] (B);
\end{scope}

\begin{scope}
	\draw[thick] (B) to [out = 270, in = 135, looseness = .75] (D);
\end{scope}

\begin{scope}
	\begin{pgfinterruptboundingbox} 
		\path [invclip] (D) circle (\gap);
	\end{pgfinterruptboundingbox}
	\draw[thick] (E) to [out = 270 , in = 45, looseness=.75] (D);
\end{scope}

\begin{scope}
	\begin{pgfinterruptboundingbox} 
		\path [invclip] (B) circle (\gap);
	\end{pgfinterruptboundingbox}
	\draw[thick] (A) to [out = 120, in = 195] (B);
\end{scope}

\begin{scope}
	\begin{pgfinterruptboundingbox} 
		\path [invclip] (D) circle (\gap);
	\end{pgfinterruptboundingbox}
	\draw[thick] (D) to [out = 225 , in = 300] (A);
\end{scope}

\begin{scope}
	\begin{pgfinterruptboundingbox} 
		\path [invclip] (B) circle (\gap);
	\end{pgfinterruptboundingbox}
	\draw[thick] (B) to [out =15, in =180] (0,4.25);
\end{scope}

\begin{scope}
	\begin{pgfinterruptboundingbox} 
		\path [invclip] (E) circle (\gap);
	\end{pgfinterruptboundingbox}
	\draw[thick] (0,4.25) to [out =0, in =165] (E);
\end{scope}

\begin{scope}
	\begin{pgfinterruptboundingbox} 
		\path [invclip] (E) circle (\gap);
	\end{pgfinterruptboundingbox}
	\draw[thick] (E) to [out =-15, in = 60] (F) to [out = 240,in= -45] (D);
	\end{scope}

\end{scope}

\end{tikzpicture}\]

    \caption{A region crossing change on the shaded region. Since the diagram on the right is of the unknot, the multi-region index of the diagram on the left is two. Inequality \ref{inequality:1} implies that the multi-region index of the trefoil is two.}
    \label{figure:RegionChange}
\end{figure}

Region index can similarly be defined on individual knot diagrams.  An \textit{unknotting region} $R$ of a knot diagram $D$ is a  region of $D$ such that the diagram obtained from a region crossing change at $R$ of $D$ is a diagram of the unknot. The \textit{region index} $\Reg(D)$ of the diagram $D$ is the minimum number of crossings in any unknotting region of $D$ if an unknotting region exists and $\Reg(D)=\infty$ otherwise. Then the region index of a knot $K$ can be defined as $\Reg(K)=\min\{\Reg(D)\;:\; D\text{ is a diagram of }K\}$. Because an unknotting region may or may not minimize the sum in Equation \ref{equation:MRI(D)},  it follows that $\MRI(D)\leq \Reg(D)$ for all knot diagrams $D$. 

The unknotting number $u(D)$ of the knot diagram $D$ is the minimum number of crossing changes needed to change $D$ into a diagram of the unknot, and the \textit{unknotting number} of the knot $K$ is the minimum of $u(D)$ over all diagrams $D$ of $K$. From the definition of multi-region index, it follows that $u(D)\leq \MRI(D)$ for all knot diagrams $D$. Putting the two previous inequalities together and minimizing over all diagrams yields the following inequality that holds for any knot $K$:
\begin{equation}
    \label{inequality:unknotting}
    u(K) \leq \MRI(K) \leq \Reg(K).
\end{equation}
If $R$ is a region that contains just one crossing in its boundary, then a region crossing change of $D$ at $R$ trades one type of Reidemeister 1 twist for the other, as in Figure \ref{r1twist}. Hence for any nontrivial knot $K$,
\begin{equation}
\label{inequality:1}
1<\MRI(K).
\end{equation}
\begin{figure}[h!]
\[\begin{tikzpicture}[outer sep = 0mm, scale = .4]

\def\gap{.15cm}

\coordinate (A) at (-3.625,.75);
\coordinate (B) at (-2,4);
\coordinate (C) at (0,7);
\coordinate (D) at (0,.5);
\coordinate (E) at (2,4);
\coordinate (F) at (3.75,1.5);
\coordinate (G) at (5.75,1.5);

\fill[white!80!black] (F) to [out = -45, in = 270, looseness=2] (G) to [out = 90, in=45, looseness=2] (F);

\begin{scope}
	\begin{pgfinterruptboundingbox} 
		\path [invclip] (E) circle (\gap);
	\end{pgfinterruptboundingbox}
	\draw[thick] (E) to [out = 90, in = 0] (C) to [out = 180, in = 90] (B);
\end{scope}

\begin{scope}
\begin{pgfinterruptboundingbox} 
		\path [invclip] (D) circle (\gap);
	\end{pgfinterruptboundingbox}
	\draw[thick] (B) to [out = 270, in = 135, looseness = .75] (D);
\end{scope}

\begin{scope}
	\begin{pgfinterruptboundingbox} 
		\path [invclip] (E) circle (\gap);
	\end{pgfinterruptboundingbox}
	\draw[thick] (E) to [out = 270 , in = 45, looseness=.75] (D);
\end{scope}

\begin{scope}
	\begin{pgfinterruptboundingbox} 
		\path [invclip] (B) circle (\gap);
	\end{pgfinterruptboundingbox}
	\draw[thick] (D) to [out = 225 , in = 300] (A) to [out = 120, in = 195] (B);
\end{scope}

\begin{scope}
	\begin{pgfinterruptboundingbox} 
		\path [invclip] (B) circle (\gap);
	\end{pgfinterruptboundingbox}
	\draw[thick] (B) to [out =15, in =165, looseness=.75] (E);
\end{scope}

\begin{scope}
	\begin{pgfinterruptboundingbox} 
		\path [invclip] (F) circle (\gap);
	\end{pgfinterruptboundingbox}
	\draw[thick] (E) to [out =-15, in = 135] (F);
\end{scope}

\begin{scope}
		\begin{pgfinterruptboundingbox} 
		\path [invclip] (F) circle (\gap);
	\end{pgfinterruptboundingbox}
	\draw[thick] (F) to [out = -45, in = 270, looseness=2](G);
\end{scope}

\begin{scope}
		\begin{pgfinterruptboundingbox} 
		\path [invclip] (D) circle (\gap);
	\end{pgfinterruptboundingbox}
	\draw[thick] (G) to [out = 90, in = 45, looseness=2] (F) to [out = 225, in = -45] (D);
\end{scope}

\draw[thick,->] (7,2.5) -- (11,2.5);

\begin{scope}[xshift = 16cm]
\coordinate (A) at (-3.625,.75);
\coordinate (B) at (-2,4);
\coordinate (C) at (0,7);
\coordinate (D) at (0,.5);
\coordinate (E) at (2,4);
\coordinate (F) at (3.75,1.5);
\coordinate (G) at (5.75,1.5);

\begin{scope}
	\begin{pgfinterruptboundingbox} 
		\path [invclip] (E) circle (\gap);
	\end{pgfinterruptboundingbox}
	\draw[thick] (E) to [out = 90, in = 0] (C) to [out = 180, in = 90] (B);
\end{scope}

\begin{scope}
\begin{pgfinterruptboundingbox} 
		\path [invclip] (D) circle (\gap);
	\end{pgfinterruptboundingbox}
	\draw[thick] (B) to [out = 270, in = 135, looseness = .75] (D);
\end{scope}

\begin{scope}
	\begin{pgfinterruptboundingbox} 
		\path [invclip] (E) circle (\gap);
	\end{pgfinterruptboundingbox}
	\draw[thick] (E) to [out = 270 , in = 45, looseness=.75] (D);
\end{scope}

\begin{scope}
	\begin{pgfinterruptboundingbox} 
		\path [invclip] (B) circle (\gap);
	\end{pgfinterruptboundingbox}
	\draw[thick] (D) to [out = 225 , in = 300] (A) to [out = 120, in = 195] (B);
\end{scope}

\begin{scope}
	\begin{pgfinterruptboundingbox} 
		\path [invclip] (B) circle (\gap);
	\end{pgfinterruptboundingbox}
	\draw[thick] (B) to [out =15, in =165, looseness=.75] (E);
\end{scope}

\begin{scope}
	\draw[thick] (E) to [out =-15, in = 135] (F);
\end{scope}

\begin{scope}
	\draw[thick] (F) to [out = -45, in = 270, looseness=2](G);
\end{scope}

\begin{scope}
		\begin{pgfinterruptboundingbox} 
		\path [invclip] (D) circle (\gap);
		\path [invclip] (F) circle (\gap);
	\end{pgfinterruptboundingbox}
	\draw[thick] (G) to [out = 90, in = 45, looseness=2] (F) to [out = 225, in = -45] (D);
\end{scope}
\end{scope}

\end{tikzpicture}\]

\caption{A region crossing change on a Reidemeister 1 twist.}
\label{r1twist}
\end{figure}
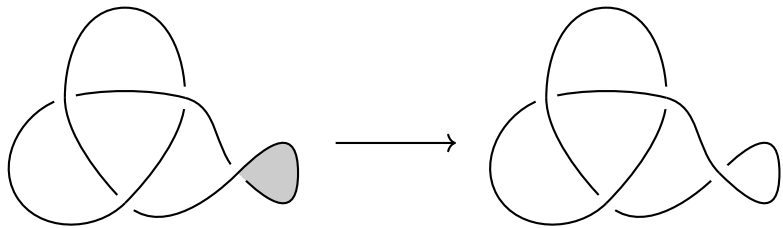

Since each crossing is in the boundary of four regions, a naive upper bound on the multi-region index of a knot $K$ is $4 c(K)$ where $c(K)$ is the crossing number of $K$. Our first main result improves this upper bound on the multi-region index of a knot.
\begin{theorem}
\label{2c}
Let $K$ be a knot with crossing number $c(K)$. Then $\MRI(K)\leq 2c(K)$.
\end{theorem}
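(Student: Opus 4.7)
The plan is to start with a minimum crossing diagram $D$ of $K$, set $n=c(K)$, and run a complementation argument on any unknotting set of regions guaranteed by Shimizu's theorem. Write $\mathcal{R}$ for the full set of regions of $D$.

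The first step is a local structural observation that uses minimality of $D$: every crossing of $D$ has its four corners in four distinct regions. Via the checkerboard coloring of $S^2\setminus |D|$, adjacent corners automatically lie in different regions because they carry opposite colors. For the two opposite (same-color) corners of a crossing $c$, if both lay in the same region $W$, I could join them by an arc $\alpha$ in $W$ and close it up by a short arc through $c$ perturbed slightly into one of the other two quadrants. The resulting simple closed curve on $S^2$ meets $|D|$ transversely at exactly two points, both on edges incident to $c$, exhibiting $c$ as a nugatory crossing; removing it by a flype would reduce the crossing number, contradicting minimality of $D$.

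Next, for any subset $T\subseteq \mathcal{R}$, let $X_T\in \mathbb{F}_2^n$ denote the mod-$2$ characteristic vector of crossings flipped when region crossing changes are performed on each region in $T$. Because each crossing of $D$ lies in exactly four distinct regions, every coordinate of $X_{\mathcal{R}}$ is $4\equiv 0\pmod 2$, so $X_{\mathcal{R}}=0$. Consequently $X_{S^c} = X_{\mathcal{R}}+X_S = X_S$ for every $S\subseteq\mathcal{R}$, meaning that $S$ and its complement $S^c=\mathcal{R}\setminus S$ flip the same crossings and therefore produce identical diagrams.

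To finish, I would apply Shimizu's theorem to obtain a set of unknotting regions $S\subseteq\mathcal{R}$; by the previous paragraph $S^c$ is then also a set of unknotting regions. Counting incidences once more using that each crossing appears in exactly four regions,
\[
\sum_{R\in S} c(R) \;+\; \sum_{R\in S^c} c(R) \;=\; \sum_{R\in \mathcal{R}} c(R) \;=\; 4n,
\]
so at least one of $S$ and $S^c$ has total cost at most $2n$, witnessing $\MRI(D)\le 2c(K)$. The main obstacle in this plan is the opening local observation that minimality forces opposite corners at each crossing to lie in distinct regions; once that standard reducedness fact is in hand, the theorem follows from the complementation identity $X_{\mathcal{R}}=0$ together with the averaging step above.
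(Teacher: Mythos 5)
Your proof is correct and follows essentially the same route as the paper's: complement an unknotting set inside an ineffective collection of regions and then average the incidence count $\sum_{R} c(R)$. The only cosmetic difference is that you complement within the full region set $\mathcal{R}$ (ineffective because each crossing of a reduced diagram meets four distinct regions, so every crossing is flipped an even number of times), whereas the paper complements the black and white subsets of the checkerboard shading independently; both give the bound $2c(K)$.
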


Define $mg_2(K)$ to be the minimum number of generators in the first homology of the $2$-fold cyclic branched cover of $S^3$ over $K$. We compute $mg_2(K)$ using the Goeritz matrix of a knot diagram; the quantity $mg_2(K)$ is the number of entries that are greater than one on the diagonal of the Smith normal form of the Goeritz matrix of any diagram of $K$. See Section \ref{section:Goeritz} for more details. Tanaka \cite{Tanaka} proved that $mg_2(K)<\Reg(K)$. Our second main theorem is a version of Tanaka's result for the multi-region index.
\begin{theorem}
\label{mg_2}
For any knot $K$, $mg_2(K)<\MRI(K).$
\end{theorem}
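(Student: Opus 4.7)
The plan is to adapt Tanaka's Goeritz-matrix argument to the multi-region setting. Fix a diagram $D$ of $K$ realizing $\MRI(K)$, together with an unknotting set $\{R_{i_1},\dots,R_{i_k}\}$ of regions satisfying $\sum_{j=1}^{k}c(R_{i_j})=\MRI(D)$, and let $D'$ be the unknot diagram produced by performing the $k$ region crossing changes. Since $D$ and $D'$ share the same underlying $4$-regular graph, a single checkerboard coloring serves for both, and the Goeritz matrices $G_D$ and $G_{D'}$ have the same size $n$. Because $D'$ is a diagram of the unknot, $G_{D'}$ is unimodular (its determinant is $\pm 1$), while $\operatorname{coker}(G_D)\cong H_1(\Sigma_2(K);\mathbb{Z})$.

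The backbone of the argument is a linear-algebra fact: if $M=G_{D'}-G_D$ has integer rank at most $r$, then $mg_2(K)\le r$. Indeed, for any prime $p$, unimodularity of $G_{D'}$ gives $\operatorname{rank}_{\mathbb{F}_p}(G_D)\ge n-\operatorname{rank}_{\mathbb{F}_p}(M)\ge n-r$, so $\dim_{\mathbb{F}_p}\bigl(\operatorname{coker}(G_D)\otimes\mathbb{F}_p\bigr)\le r$, and for $p$ dividing the smallest nontrivial invariant factor of $G_D$ this $\mathbb{F}_p$-dimension is exactly $mg_2(K)$. It therefore suffices to establish $\operatorname{rank}_{\mathbb{Z}}(M)\le\MRI(D)-1$.

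Decompose $M=\sum_{j=1}^{k}M_j$, where $M_j$ is the change in the Goeritz matrix caused by the region crossing change at $R_{i_j}$. A single crossing change at a crossing with Goeritz-colored regions $W_a$ and $W_b$ alters $G_D$ by a rank-one symmetric summand $\pm 2(e_a-e_b)(e_a-e_b)^\top$, so $M_j$ is a sum of $c(R_{i_j})$ rank-one matrices. When $R_{i_j}$ is of the color opposite to the Goeritz coloring, the Goeritz-colored regions at the consecutive crossings $c_1,\dots,c_n$ along $\partial R_{i_j}$ form a cyclic sequence $W_0,W_1,\dots,W_{n-1}$ with $W_n=W_0$, and the vectors $v_{c_l}=e_{W_{l-1}}-e_{W_l}$ telescope to $0$; hence they span a subspace of dimension at most $n-1$, and $\operatorname{rank}(M_j)\le c(R_{i_j})-1$. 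When $R_{i_j}$ shares the Goeritz coloring only the naive bound $\operatorname{rank}(M_j)\le c(R_{i_j})$ is available, so I choose the Goeritz coloring to be opposite to the more common color among the $R_{i_j}$; then at least $\lceil k/2\rceil\ge 1$ of the $R_{i_j}$ are of the opposite color, and
\[\operatorname{rank}(M)\;\le\;\sum_{j=1}^{k}\operatorname{rank}(M_j)\;\le\;\MRI(D)-\lceil k/2\rceil\;\le\;\MRI(D)-1.\]
Together with the linear-algebra fact this yields $mg_2(K)\le\MRI(K)-1$, which is the desired strict inequality for nontrivial $K$. The main technical point is verifying that the telescoping identity $\sum_l v_{c_l}=0$ persists after one row and column are deleted to form the reduced Goeritz matrix, but this is immediate since the relation holds in the full $n$-vertex space and is preserved by the coordinate projection.
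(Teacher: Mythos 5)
Your route is genuinely different from the paper's. The paper never estimates ranks directly: it performs the region crossing changes at $R_{i_1},\dots,R_{i_{k-1}}$, applies Tanaka's theorem $mg_2(K_1)<\Reg(K_1)\le c(R_{i_k})$ to the one remaining region, and absorbs the other $\sum_{j<k}c(R_{i_j})$ crossing changes with a separate stability lemma ($|mg_2(J)-mg_2(K)|\le 1$ per crossing change). You instead prove everything in one shot by an $\mathbb{F}_p$-rank estimate on $M=G_{D'}-G_D$; the unimodularity of $G_{D'}$, the invariant-factor step, the decomposition of $M$ into rank-one summands, and the freedom to use either checkerboard shading are all fine, and your method would even give the slightly stronger bound $mg_2(K)\le \MRI(D)-\lceil k/2\rceil$.

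However, the key inequality $\operatorname{rank}(M_j)\le c(R_{i_j})-1$ has a gap: the telescoping argument tacitly assumes that each crossing of $\partial R_{i_j}$ is met exactly once by the boundary walk of $R_{i_j}$. A region can meet a crossing at two opposite corners (this is exactly the nugatory situation), and then the two passages contribute $e_A-e_B$ and $e_B-e_A$, which cancel, so that crossing drops out of the relation; if every boundary crossing of $R_{i_j}$ is of this type the relation is vacuous and $\operatorname{rank}(M_j)=c(R_{i_j})$ can occur. Concretely, in the one-crossing diagram of the unknot, the region meeting the crossing at both unshaded corners has $c(R)=1$, yet the region crossing change there alters the Goeritz matrix by a rank-one matrix, so the claimed bound $c(R)-1=0$ fails. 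You cannot simply assume $D$ is reduced, because $D$ is chosen to realize $\MRI(K)$, not the crossing number. The gap is patchable: the cancellation just described shows the bound $c(R)-1$ still holds whenever at least one boundary crossing is met only once; and a region all of whose boundary crossings are met twice changes only nugatory crossings, hence does not change the knot type, so deleting it from the unknotting set would strictly decrease $\sum_j c(R_{i_j})$, contradicting minimality. Some argument of this kind must be added (the paper sidesteps the issue entirely because Tanaka's theorem is applied to the knot $K_1$, with $\Reg$ minimized over all diagrams). The technical point you flag as the main one (deleting the $R_0$ row and column) is indeed immediate; this one is not. A minor remark: for the unknot both sides can be $0$, so the strict inequality should be read for nontrivial $K$, as you note --- the paper's proof carries the same implicit convention.
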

The proof of Theorem \ref{mg_2} follows from an examination of the behavior of Goeritz matrices under crossing changes and region crossing changes. In Section \ref{section:Conclusion}, we use Theorem \ref{mg_2} to show that for any integer $n\geq 2$, there is a knot $K_n$ such that $\MRI(K_n)=n$. Additionally, we compute $\MRI(K)$ for many knots with $9$ or fewer crossings.

This paper is organized as follows. In Section \ref{section:Upper}, we prove Theorem \ref{2c}. In Section \ref{section:Goeritz}, we use the Goeritz matrix of a knot to prove Theorem \ref{mg_2}. Finally, in Section \ref{section:Conclusion}, we compute the multi-region index of some interesting examples and discuss future directions. 

The authors thank Heather Russell for her comments on a draft of this paper.

\section{An upper bound on $\MRI(K)$}
\label{section:Upper}

In this section, we prove Theorem \ref{2c} giving an upper bound for the multi-region index of a knot. A key idea of the proof involves the checkerboard shading of a knot diagram. A checkerboard shading of a knot diagram is an assignment to each region of a color black or white such that no two regions that share an edge in their boundaries are colored the same. Our convention is to assign the unbounded face the color white, and with this convention, the checkerboard shading of a knot diagram is unique. Figure \ref{figure:Checkerboard} shows a checkerboard shading of a diagram of the knot $6_4$. 

A crossing in a knot diagram $D$ is \textit{nugatory} if there exists a simple closed curve in the plane that only intersects $|D|$ at that crossing, and a diagram without nugatory crossings is \textit{reduced}. All minimal crossing diagrams are reduced. Performing region crossing changes on the set of black (or white) regions of a reduced diagram does not change the knot diagram at all. A set of regions where performing a region crossing change of every region in the set does not change the diagram is called an  \textit{ineffective set}. Cliveman, Morris, and Russell \cite{Russell:Ineffective} study ineffective sets for knot and link diagrams.

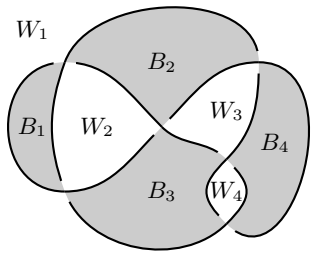
\begin{figure}[h]
\[\begin{tikzpicture}[outer sep = 0mm, scale = .43]

\def\gap{.15cm}

\coordinate (A) at (0,5);
\coordinate (B) at (6,5);
\coordinate (C) at (3,3);
\coordinate (D) at (0,1);
\coordinate (E) at (5,2);
\coordinate (F) at (5,0);

\fill[white!80!black] (D) to [out = 180, in = 180, looseness=1.5] (A) to [out = 250, in =110] (D);

\fill[white!80!black] (A) to [out = 70, in = 90] (B) to [out = 180, in = 45] (C) to [out = 135, in = 0] (A);

\fill[white!80!black]  (F) to [out = -45, in = 0, looseness=1.5] (B)  to [out = 270, in = 45] (E) to [out = -45, in = 45, looseness = 1.5] (F);

\fill[white!80!black]  (F) to [out = 225, in = 290] (D) to [out = 0, in = 225] (C) to [out = -45, in = 135] (E) to [out = 225, in = 135, looseness = 1.5] (F);

\draw (-1,3) node{\footnotesize{$B_1$}};
\draw (3,5) node{\footnotesize{$B_2$}};
\draw (3,1) node{\footnotesize{$B_3$}};
\draw (6.5,2.5) node{\footnotesize{$B_4$}};

\draw (-1,6) node{\footnotesize{$W_1$}};
\draw (1,3) node{\footnotesize{$W_2$}};
\draw (5,3.5) node{\footnotesize{$W_3$}};
\draw (5,1) node{\footnotesize{$W_4$}};


\begin{scope}
	\begin{pgfinterruptboundingbox} 
		\path [invclip] (D) circle (\gap);
	\end{pgfinterruptboundingbox}
	\draw[thick] (F) to [out = 225, in = 290] (D);
	
\end{scope}

\begin{scope}
	\begin{pgfinterruptboundingbox} 
		\path [invclip] (E) circle (\gap);
	\end{pgfinterruptboundingbox}
	\draw[thick] (E) to [out = -45, in = 45, looseness = 1.5] (F);
	
\end{scope}

\begin{scope}
	\begin{pgfinterruptboundingbox} 
		\path [invclip] (E) circle (\gap);
	\end{pgfinterruptboundingbox}
	\draw[thick] (C) to [out = -45, in = 135] (E);
	
\end{scope}

\begin{scope}
	\begin{pgfinterruptboundingbox} 
		\path [invclip] (A) circle (\gap);
	\end{pgfinterruptboundingbox}
	\draw[thick] (A) to [out = 0, in = 135] (C);
	
\end{scope}

\begin{scope}
	\begin{pgfinterruptboundingbox} 
		\path [invclip] (A) circle (\gap);
	\end{pgfinterruptboundingbox}
	\draw[thick] (D) to [out = 180, in = 180, looseness=1.5] (A);
	
\end{scope}

\begin{scope}
	\begin{pgfinterruptboundingbox} 
		\path [invclip] (C) circle (\gap);
	\end{pgfinterruptboundingbox}
	\draw[thick] (C) to [out = 225, in = 0] (D);
	
\end{scope}

\begin{scope}
	\begin{pgfinterruptboundingbox} 
		\path [invclip] (C) circle (\gap);
	\end{pgfinterruptboundingbox}
	\draw[thick] (B) to [out = 180, in = 45] (C);
	
\end{scope}

\begin{scope}
	\begin{pgfinterruptboundingbox} 
		\path [invclip] (F) circle (\gap);
	\end{pgfinterruptboundingbox}
	\draw[thick] (F) to [out = -45, in = 0, looseness=1.5] (B);
	
\end{scope}

\begin{scope}
	\begin{pgfinterruptboundingbox} 
		\path [invclip] (F) circle (\gap);
	\end{pgfinterruptboundingbox}
	\draw[thick] (E) to [out = 225, in = 135, looseness = 1.5] (F);
	
\end{scope}

\begin{scope}
	\begin{pgfinterruptboundingbox} 
		\path [invclip] (B) circle (\gap);
	\end{pgfinterruptboundingbox}
	\draw[thick] (B) to [out = 270, in = 45] (E);
	
\end{scope}

\begin{scope}
	\begin{pgfinterruptboundingbox} 
		\path [invclip] (B) circle (\gap);
	\end{pgfinterruptboundingbox}
	\draw[thick] (A) to [out = 70, in = 90] (B);
	
\end{scope}

\begin{scope}
	\begin{pgfinterruptboundingbox} 
		\path [invclip] (D) circle (\gap);
	\end{pgfinterruptboundingbox}
	\draw[thick] (D) to [out = 110, in = 250] (A);
	
\end{scope}

\end{tikzpicture}\]
\caption{A checkerboard shading of a diagram of the knot $6_4$. The black regions are $\mathcal{B}=\{B_1,B_2, B_3, B_4\}$, and the white regions are $\mathcal{W}=\{W_1,W_2,W_3,W_4\}$.}
\label{figure:Checkerboard}
\end{figure}

\begin{proof}[Proof of Theorem \ref{2c}]
Let $D$ be a minimal crossing diagram of a knot $K$ whose set of checkerboard shaded regions is $\mathcal{R}=\mathcal{B}\sqcup \mathcal{W}$ where $\mathcal{B}$ is the subset of black regions and $\mathcal{W}$ is the set of white regions. Since $D$ is a minimal crossing diagram, it is reduced. Let $B\subseteq \mathcal{B}$ and $W\subseteq \mathcal{W}$ be sets of black and white regions respectively. Define their complements as $B^c=\mathcal{B}-B$ and $W^c=\mathcal{W}-W$. A crossing is in the boundary of two regions in $B$ if and only if it is not in the boundary of any region in $B^c$, and likewise a crossing is in the boundary of exactly one region in $B$ if and only if it is in the boundary of exactly one region in $B^c$. Analogous statements hold for the subset $W$ of white regions and its complement $W^c$. Therefore performing region crossing changes on $D$ at any of the four subsets of regions $B\cup W, B^c\cup W, B\cup W^c$, or $B^c\cup W^c$ results in the same diagram $D'$. 

Now suppose that $B\cup W$ is a set of unknotting regions. Then $B^c\cup W, B\cup W^c$, and $B^c\cup W^c$ are also sets of unknotting regions. Because the total number of crossings in the boundary of all the regions in $\mathcal{B}$ is $2c(D)$, it follows that at least one of $B$ or $B^c$ has at most $c(D)$ crossings in the boundary of all its regions. Similarly, since the total number of crossing in the boundary of all the regions in $\mathcal{W}$ is $2c(D)$, it follows that at least one of $W$ or $W^c$ has at most $c(D)$ crossings in the boundary of all its regions. Therefore at least one of the four sets of unknotting regions $B\cup W, B^c\cup W, B\cup W^c$, or $B^c\cup W^c$ has a total of at most $2c(D)$ crossings in the boundary of its regions.

\end{proof}
Figure \ref{figure:2c_1} shows an example of the algorithm described in the proof of Theorem \ref{2c} for the standard diagram of the knot $6_4$. Performing region crossing changes on the black regions $B_1$, $B_2$, and $B_3$ produces the same diagram as performing a region crossing change on the black region $B_4$. The sum in Equation \ref{equation:MRI(D)} for the set of unknotting regions on the left of Figure \ref{figure:2c_1} is 12 while the sum for the set of unknotting regions on the right of Figure \ref{figure:2c_1} is 6.

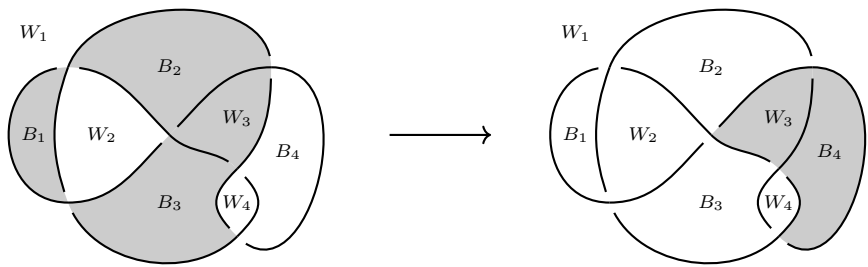
\begin{figure}[h!]
\[\begin{tikzpicture}[outer sep = 0mm, scale = .45]

\def\gap{.15cm}

\coordinate (A) at (0,5);
\coordinate (B) at (6,5);
\coordinate (C) at (3,3);
\coordinate (D) at (0,1);
\coordinate (E) at (5,2);
\coordinate (F) at (5,0);

\fill[white!80!black] (D) to [out = 180, in = 180, looseness=1.5] (A) to [out = 250, in =110] (D);

\fill[white!80!black] (A) to [out = 70, in = 90] (B) to [out = 180, in = 45] (C) to [out = 135, in = 0] (A);

\fill[white!80!black] (B) to [out = 180, in = 45] (C) to [out = -45, in = 135] (E) to [out = 45, in = 270] (B);


\fill[white!80!black]  (F) to [out = 225, in = 290] (D) to [out = 0, in = 225] (C) to [out = -45, in = 135] (E) to [out = 225, in = 135, looseness = 1.5] (F);

\draw (-1,3) node{\tiny{$B_1$}};
\draw (3,5) node{\tiny{$B_2$}};
\draw (3,1) node{\tiny{$B_3$}};
\draw (6.5,2.5) node{\tiny{$B_4$}};

\draw (-1,6) node{\tiny{$W_1$}};
\draw (1,3) node{\tiny{$W_2$}};
\draw (5,3.5) node{\tiny{$W_3$}};
\draw (5,1) node{\tiny{$W_4$}};


\begin{scope}
	\begin{pgfinterruptboundingbox} 
		\path [invclip] (D) circle (\gap);
	\end{pgfinterruptboundingbox}
	\draw[thick] (F) to [out = 225, in = 290] (D);
	
\end{scope}

\begin{scope}
	\begin{pgfinterruptboundingbox} 
		\path [invclip] (E) circle (\gap);
	\end{pgfinterruptboundingbox}
	\draw[thick] (E) to [out = -45, in = 45, looseness = 1.5] (F);
	
\end{scope}

\begin{scope}
	\begin{pgfinterruptboundingbox} 
		\path [invclip] (E) circle (\gap);
	\end{pgfinterruptboundingbox}
	\draw[thick] (C) to [out = -45, in = 135] (E);
	
\end{scope}

\begin{scope}
	\begin{pgfinterruptboundingbox} 
		\path [invclip] (A) circle (\gap);
	\end{pgfinterruptboundingbox}
	\draw[thick] (A) to [out = 0, in = 135] (C);
	
\end{scope}

\begin{scope}
	\begin{pgfinterruptboundingbox} 
		\path [invclip] (A) circle (\gap);
	\end{pgfinterruptboundingbox}
	\draw[thick] (D) to [out = 180, in = 180, looseness=1.5] (A);
	
\end{scope}

\begin{scope}
	\begin{pgfinterruptboundingbox} 
		\path [invclip] (C) circle (\gap);
	\end{pgfinterruptboundingbox}
	\draw[thick] (C) to [out = 225, in = 0] (D);
	
\end{scope}

\begin{scope}
	\begin{pgfinterruptboundingbox} 
		\path [invclip] (C) circle (\gap);
	\end{pgfinterruptboundingbox}
	\draw[thick] (B) to [out = 180, in = 45] (C);
	
\end{scope}

\begin{scope}
	\begin{pgfinterruptboundingbox} 
		\path [invclip] (F) circle (\gap);
	\end{pgfinterruptboundingbox}
	\draw[thick] (F) to [out = -45, in = 0, looseness=1.5] (B);
	
\end{scope}

\begin{scope}
	\begin{pgfinterruptboundingbox} 
		\path [invclip] (F) circle (\gap);
	\end{pgfinterruptboundingbox}
	\draw[thick] (E) to [out = 225, in = 135, looseness = 1.5] (F);
	
\end{scope}

\begin{scope}
	\begin{pgfinterruptboundingbox} 
		\path [invclip] (B) circle (\gap);
	\end{pgfinterruptboundingbox}
	\draw[thick] (B) to [out = 270, in = 45] (E);
	
\end{scope}

\begin{scope}
	\begin{pgfinterruptboundingbox} 
		\path [invclip] (B) circle (\gap);
	\end{pgfinterruptboundingbox}
	\draw[thick] (A) to [out = 70, in = 90] (B);
	
\end{scope}

\begin{scope}
	\begin{pgfinterruptboundingbox} 
		\path [invclip] (D) circle (\gap);
	\end{pgfinterruptboundingbox}
	\draw[thick] (D) to [out = 110, in = 250] (A);
	
\end{scope}

\draw[thick, ->] (9.5,3) -- (12.5,3);

\begin{scope}[xshift = 16 cm]
\coordinate (A) at (0,5);
\coordinate (B) at (6,5);
\coordinate (C) at (3,3);
\coordinate (D) at (0,1);
\coordinate (E) at (5,2);
\coordinate (F) at (5,0);



\fill[white!80!black] (B) to [out = 180, in = 45] (C) to [out = -45, in = 135] (E) to [out = 45, in = 270] (B);

\fill[white!80!black]  (F) to [out = -45, in = 0, looseness=1.5] (B)  to [out = 270, in = 45] (E) to [out = -45, in = 45, looseness = 1.5] (F);


\draw (-1,3) node{\tiny{$B_1$}};
\draw (3,5) node{\tiny{$B_2$}};
\draw (3,1) node{\tiny{$B_3$}};
\draw (6.5,2.5) node{\tiny{$B_4$}};

\draw (-1,6) node{\tiny{$W_1$}};
\draw (1,3) node{\tiny{$W_2$}};
\draw (5,3.5) node{\tiny{$W_3$}};
\draw (5,1) node{\tiny{$W_4$}};


\begin{scope}
	\begin{pgfinterruptboundingbox} 
		\path [invclip] (D) circle (\gap);
	\end{pgfinterruptboundingbox}
	\draw[thick] (F) to [out = 225, in = 290] (D);
	
\end{scope}

\begin{scope}
	\begin{pgfinterruptboundingbox} 
		\path [invclip] (E) circle (\gap);
	\end{pgfinterruptboundingbox}
	\draw[thick] (E) to [out = -45, in = 45, looseness = 1.5] (F);
	
\end{scope}

\begin{scope}
	\begin{pgfinterruptboundingbox} 
		\path [invclip] (E) circle (\gap);
	\end{pgfinterruptboundingbox}
	\draw[thick] (C) to [out = -45, in = 135] (E);
	
\end{scope}

\begin{scope}
	\begin{pgfinterruptboundingbox} 
		\path [invclip] (A) circle (\gap);
	\end{pgfinterruptboundingbox}
	\draw[thick] (A) to [out = 0, in = 135] (C);
	
\end{scope}

\begin{scope}
	\begin{pgfinterruptboundingbox} 
		\path [invclip] (A) circle (\gap);
	\end{pgfinterruptboundingbox}
	\draw[thick] (D) to [out = 180, in = 180, looseness=1.5] (A);
	
\end{scope}

\begin{scope}
	\begin{pgfinterruptboundingbox} 
		\path [invclip] (C) circle (\gap);
	\end{pgfinterruptboundingbox}
	\draw[thick] (C) to [out = 225, in = 0] (D);
	
\end{scope}

\begin{scope}
	\begin{pgfinterruptboundingbox} 
		\path [invclip] (C) circle (\gap);
	\end{pgfinterruptboundingbox}
	\draw[thick] (B) to [out = 180, in = 45] (C);
	
\end{scope}

\begin{scope}
	\begin{pgfinterruptboundingbox} 
		\path [invclip] (F) circle (\gap);
	\end{pgfinterruptboundingbox}
	\draw[thick] (F) to [out = -45, in = 0, looseness=1.5] (B);
	
\end{scope}

\begin{scope}
	\begin{pgfinterruptboundingbox} 
		\path [invclip] (F) circle (\gap);
	\end{pgfinterruptboundingbox}
	\draw[thick] (E) to [out = 225, in = 135, looseness = 1.5] (F);
	
\end{scope}

\begin{scope}
	\begin{pgfinterruptboundingbox} 
		\path [invclip] (B) circle (\gap);
	\end{pgfinterruptboundingbox}
	\draw[thick] (B) to [out = 270, in = 45] (E);
	
\end{scope}

\begin{scope}
	\begin{pgfinterruptboundingbox} 
		\path [invclip] (B) circle (\gap);
	\end{pgfinterruptboundingbox}
	\draw[thick] (A) to [out = 70, in = 90] (B);
	
\end{scope}

\begin{scope}
	\begin{pgfinterruptboundingbox} 
		\path [invclip] (D) circle (\gap);
	\end{pgfinterruptboundingbox}
	\draw[thick] (D) to [out = 110, in = 250] (A);
	
\end{scope}

\end{scope}

\end{tikzpicture}\]
\caption{Replacing the shaded regions $B_1$, $B_2$, and $B_3$ with $B_4$ yields the same diagram after performing region crossing changes and decreases the sum in Equation \ref{equation:MRI(D)}.}
\label{figure:2c_1}
\end{figure}

Among knot diagrams in Rolfsen's table of knots with ten or fewer crossings, every diagram except one has $\MRI(D)<c(D)$. The standard diagram $D_{9_{35}}$ of the knot $9_{35}$ has $\MRI(D_{9_{35}}) = 9$, although we do not know if the standard diagram minimizes the multi-region index over all diagrams of $9_{35}$. See Figure \ref{figure:9_35}. 
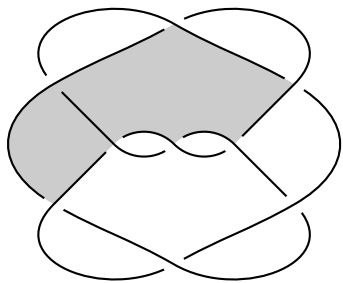
\begin{figure}[h]
\[\begin{tikzpicture}[scale = .4]

\def\gap{.15cm}

\coordinate (A) at (0,2);
\coordinate (B) at (0,6);
\coordinate (C) at (2,4);
\coordinate (D) at (4,0);
\coordinate (E) at (4,4);
\coordinate (F) at (4,8);
\coordinate (G) at (6,4);
\coordinate (H) at (8,2);
\coordinate (I) at (8,6);

\fill[white!80!black] (A) to [out = 45, in=225] (C) to [out = 45, in = 135] (E) to [out = 45, in = 135] (G) to [out =45, in = 225] (I) to [out = 150, in = -30] (F) to [out = 210, in = 30] (B) to [out = 210, in = 150, looseness = 1.5] (A);

\begin{scope}
	\begin{pgfinterruptboundingbox} 
		\path [invclip] (H) circle (\gap);
		\path [invclip] (E) circle (\gap);
	\end{pgfinterruptboundingbox}
	\draw[thick] (H) to [out = 135, in = -45, looseness = 1] (G) to [out = 135, in = 45, looseness =1] (E);	
\end{scope}

\begin{scope}
	\begin{pgfinterruptboundingbox} 
		\path [invclip] (H) circle (\gap);
		\path [invclip] (A) circle (\gap);
	\end{pgfinterruptboundingbox}
	\draw[thick] (A) to [out = -30, in = 150, looseness = 1] (D) to [out = -30, in = -45, looseness =1.5] (H);	
\end{scope}

\begin{scope}
	\begin{pgfinterruptboundingbox} 
		\path [invclip] (F) circle (\gap);
		\path [invclip] (A) circle (\gap);
	\end{pgfinterruptboundingbox}
	\draw[thick] (F) to [out = 210, in = 30, looseness = 1] (B) to [out = 210, in = 150, looseness =1.5] (A);	
\end{scope}

\begin{scope}
	\begin{pgfinterruptboundingbox} 
		\path [invclip] (F) circle (\gap);
		\path [invclip] (G) circle (\gap);
	\end{pgfinterruptboundingbox}
	\draw[thick] (G) to [out = 45, in = 225, looseness = 1] (I) to [out = 45, in = 30, looseness =1.5] (F);	
\end{scope}

\begin{scope}
	\begin{pgfinterruptboundingbox} 
		\path [invclip] (C) circle (\gap);
		\path [invclip] (G) circle (\gap);
	\end{pgfinterruptboundingbox}
	\draw[thick] (C) to [out = 45, in = 135, looseness = 1] (E) to [out = -45, in = 225, looseness =1] (G);	
\end{scope}

\begin{scope}
	\begin{pgfinterruptboundingbox} 
		\path [invclip] (C) circle (\gap);
		\path [invclip] (D) circle (\gap);
	\end{pgfinterruptboundingbox}
	\draw[thick] (D) to [out = 210, in = 225, looseness = 1.5] (A) to [out = 45, in = 225, looseness =1] (C);	
\end{scope}

\begin{scope}
	\begin{pgfinterruptboundingbox} 
		\path [invclip] (I) circle (\gap);
		\path [invclip] (D) circle (\gap);
	\end{pgfinterruptboundingbox}
	\draw[thick] (I) to [out = -30, in = 30, looseness = 1.5] (H) to [out = 210, in = 30, looseness =1] (D);	
\end{scope}

\begin{scope}
	\begin{pgfinterruptboundingbox} 
		\path [invclip] (I) circle (\gap);
		\path [invclip] (B) circle (\gap);
	\end{pgfinterruptboundingbox}
	\draw[thick] (B) to [out = 135, in = 150, looseness = 1.5] (F) to [out = -30, in = 150, looseness =1] (I);	
\end{scope}

\begin{scope}
	\begin{pgfinterruptboundingbox} 
		\path [invclip] (E) circle (\gap);
		\path [invclip] (B) circle (\gap);
	\end{pgfinterruptboundingbox}
	\draw[thick] (E) to [out = 225, in = -45, looseness = 1] (C) to [out = 135, in = -45, looseness =1] (B);	
\end{scope}


\end{tikzpicture}\]

\caption{The standard diagram $D_{9_{35}}$ of the knot $9_{35}$ has $\MRI(D_{9_{35}})=9$. The shaded regions are a set of unknotting regions that minimize the sum in Equation \ref{equation:MRI(D)}.}
\label{figure:9_35}
\end{figure}

It is possible that a better upper bound than $2c(K)$ exists for the multi-region index of a knot. As the proof of Theorem \ref{2c} shows, if $D$ and $D'$ are two diagrams with the same projection (i.e. $|D|=|D'|$), then it is possible to transform $D$ into $D'$ via a sequence of region crossing changes where the total number of crossings in the boundary of the regions being changed is at most $2c(D)$. So the fact that $D'$ is a diagram of the unknot is not used in a crucial way in our proof. Perhaps finding a better upper bound for the multi-region index of a knot will make use of this fact.

\section{The Goeritz matrix and $mg_2(K)$}
\label{section:Goeritz}
In this section, we recall the construction of the Goeritz matrix of a knot diagram, show how to use the Goeritz matrix to compute $mg_2(K)$, and prove Theorem \ref{mg_2}.

Let $K$ be a knot with diagram $D$. Checkerboard shade the regions in $D$ so that regions sharing a crossing, but not a side, are shaded. Let $R_0, \dots, R_m$ be the shaded regions. The index of a crossing $\zeta(c)$ is the value of $\pm 1$, as shown in Figure \ref{figure:CrossingType}. 

\begin{figure}[h!]
    \centering
   \[\begin{tikzpicture}[thick]

\begin{scope}[xshift = 5cm]
\fill[white!80!black] (2,2) -- (1,1) -- (0,2);
\fill[white!80!black] (0,0) -- (1,1) -- (2,0);
\draw (0,0) -- (2,2);
\draw (0,2) -- (.7,1.3);
\draw (1.3,.7) -- (2,0);
\draw (1,-.5) node{$\zeta(c) = -1$};
\end{scope}

\fill[white!80!black] (2,2) -- (1,1) -- (0,2);
\fill[white!80!black] (0,0) -- (1,1) -- (2,0);
\draw (0,2) -- (2,0);
\draw (0,0) -- (.7,.7);
\draw (1.3,1.3) -- (2,2);
\draw (1,-.5) node{$\zeta(c) = 1$};

\end{tikzpicture}\]
    \caption{The index $\zeta(c)$ at a crossing c}
    \label{figure:CrossingType}
\end{figure}
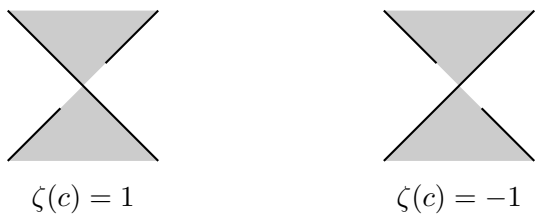

Define the pre-Goeritz matrix $\overline{G}_D$ to be the symmetric $(m+1) \times (m+1)$ matrix whose ($i, j$)th entry is $\sum \zeta(c)$ where the sum is over all crossings in the boundary of regions $R_i$ and $R_j$ and whose $(i,i)$th diagonal entry is the negative of the sum of all the other entries in the $i$th column. The \textit{Goeritz matrix} of $D$, denoted $G_D$, is the $m \times m$ matrix obtained by deleting the first column and first row of $\overline{G}_D$. 

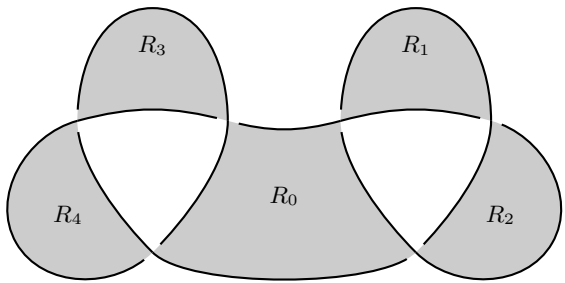
\begin{figure}[h!]
    \centering
    \[\begin{tikzpicture}[outer sep = 0mm, scale = .5]

\def\gap{.15cm}

\coordinate (A) at (-3.625,.75);
\coordinate (B) at (-2,4);
\coordinate (C) at (0,7);
\coordinate (D) at (0,.5);
\coordinate (E) at (2,4);
\coordinate (F) at (3.625,.75);
\coordinate (G) at (5,4);
\coordinate (H) at (9,4);
\coordinate (I) at (7,7);
\coordinate (J) at (7,.5);
\coordinate (K) at (10.625,.75);

\fill[white!80!black] (E) to [out = 90, in = 0] (C) to [out = 180, in = 90] (B)  to [out = 15, in = 165] (E);
\fill[white!80!black] (B) to [out = 270, in = 135, looseness = .75] (D) to [out = 225 , in = 300] (A) to [out = 120, in = 195] (B);
\fill[white!80!black] (J) to [out = -45, in = 240] (K) to [out = 60, in = -15] (H)  to [out = 270, in =45, looseness=.75] (J);
\fill[white!80!black] (H) to [out = 90 , in = 0] (I) to [out = 180, in = 90] (G) to [out = 15, in = 165] (H);

\fill[white!80!black] (D) to [out = -45 , in = 225, looseness=.5] (J) to [out = 135, in = 270, looseness = .75] (G) to [out = 195, in = -15] (E) to  [out =270 , in = 45, looseness=.75] (D);

\draw (3.5,2) node{\footnotesize{$R_0$}};
\draw (7,6) node{\footnotesize{$R_1$}};
\draw (9.25,1.5) node{\footnotesize{$R_2$}};
\draw (0,6) node{\footnotesize{$R_3$}};
\draw (-2.25,1.5) node{\footnotesize{$R_4$}};



\begin{scope}
	\begin{pgfinterruptboundingbox} 
		\path [invclip] (B) circle (\gap);
	\end{pgfinterruptboundingbox}
	\draw[thick] (E) to [out = 90, in = 0] (C) to [out = 180, in = 90] (B);
\end{scope}

\begin{scope}
\begin{pgfinterruptboundingbox} 
		\path [invclip] (B) circle (\gap);
	\end{pgfinterruptboundingbox}
	\draw[thick] (B) to [out = 270, in = 135, looseness = .75] (D);
\end{scope}

\begin{scope}
	\begin{pgfinterruptboundingbox} 
		\path [invclip] (J) circle (\gap);
	\end{pgfinterruptboundingbox}
	\draw[thick] (D) to [out = -45 , in = 225, looseness=.5] (J);
\end{scope}

\begin{scope}
	\begin{pgfinterruptboundingbox} 
		\path [invclip] (J) circle (\gap);
	\end{pgfinterruptboundingbox}
	\draw[thick] (J) to [out = 45 , in = 270, looseness=.75] (H);
\end{scope}

\begin{scope}
	\begin{pgfinterruptboundingbox} 
		\path [invclip] (G) circle (\gap);
	\end{pgfinterruptboundingbox}
	\draw[thick] (H) to [out = 90 , in = 0] (I) to [out = 180, in = 90] (G);
\end{scope}

\begin{scope}
	\begin{pgfinterruptboundingbox} 
		\path [invclip] (G) circle (\gap);
	\end{pgfinterruptboundingbox}
	\draw[thick] (G) to [out = 270, in = 135, looseness = .75] (J);
\end{scope}

\begin{scope}
	\begin{pgfinterruptboundingbox} 
		\path [invclip] (H) circle (\gap);
	\end{pgfinterruptboundingbox}
	\draw[thick] (J) to [out = -45, in = 240] (K) to [out = 60, in = -15] (H);
\end{scope}

\begin{scope}
	\begin{pgfinterruptboundingbox} 
		\path [invclip] (H) circle (\gap);
	\end{pgfinterruptboundingbox}
	\draw[thick] (H) to [out = 165, in = 15] (G);
\end{scope}

\begin{scope}
	\begin{pgfinterruptboundingbox} 
		\path [invclip] (E) circle (\gap);
	\end{pgfinterruptboundingbox}
	\draw[thick] (G) to [out = 195, in = -15] (E);
\end{scope}

\begin{scope}
	\begin{pgfinterruptboundingbox} 
		\path [invclip] (E) circle (\gap);
	\end{pgfinterruptboundingbox}
	\draw[thick] (E) to [out = 165, in = 15] (B);
\end{scope}
\begin{scope}
	\begin{pgfinterruptboundingbox} 
		\path [invclip] (D) circle (\gap);
	\end{pgfinterruptboundingbox}
	\draw[thick] (D) to [out = 225 , in = 300] (A) to [out = 120, in = 195] (B);
\end{scope}

\begin{scope}
	\begin{pgfinterruptboundingbox} 
		\path [invclip] (D) circle (\gap);
	\end{pgfinterruptboundingbox}
	\draw[thick] (D) to [out =45 , in = 270, looseness=.75] (E);
\end{scope}


\end{tikzpicture}\]

    \caption{A checkerboard shading of a diagram of the connected sum of two trefoils.}
    \label{connectsum}
\end{figure}

Figure \ref{connectsum} shows a checkerboard shading of a diagram $D$ of the connected sum of two trefoils. Its pre-Goeritz matrix $\overline{G}_D$ and Goeritz matrix $G_D$ are
\[\overline{G}_D =
\begin{blockarray}{cccccc}
 & R_0 & R_1 & R_2 & R_3 & R_4 \\
\begin{block}{c[rrrrr]}
  R_0 & 4 & -1 & -1 & -1 & -1 \\
  R_1 & -1 & 2 & -1 & 0 & 0\\ 
  R_2 & -1 & -1 & 2 & 0 & 0 \\
  R_3 & -1 & 0 & 0 & 2 & -1 \\
  R_4 & -1 & 0 & 0 & -1 & 2\\
\end{block}
\end{blockarray}
\quad \text{and} \quad G_D = 
\begin{blockarray}{ccccc}
 & R_1 & R_2 & R_3 & R_4 \\
\begin{block}{c[rrrr]}
  R_1  & 2 & -1 & 0 & 0\\ 
  R_2  & -1 & 2 & 0 & 0 \\
  R_3  & 0 & 0 & 2 & -1 \\
  R_4  & 0 & 0 & -1 & 2\\
\end{block}
\end{blockarray}.
\]

The invariant $mg_2(K)$ is computed from the Smith normal form of the Goeritz matrix $G_D$. There are invertible $m \times m$ matrices $S$ and $T$ with integer entries such that 
\[S G_D T=\begin{bmatrix}
\alpha_1 & 0 & 0 & & \cdots & & 0 \\
0 & \alpha_2 & 0 & & \cdots & & 0 \\
0 & 0 & \ddots & & & & 0\\
\vdots & & & \alpha_r & & & \vdots \\
 & & & & 0 & & \\
 & & & & & \ddots &  \\
0 & & & \cdots & & & 0
\end{bmatrix},\]
where each $\alpha_k$ is a positive integer satisfying $\alpha_k | \alpha_{k+1}$ for $1\leq k < r$. The matrix $SG_DT$ is called the \textit{Smith normal form} of $G_D$.

A matrix can be transformed into its Smith normal form via a sequence of the following row and column operations:
\begin{enumerate}[label=\roman*.]
    \item replacing row or column $i$ with $i + tj$, where $j$ is another row or column respectively and $t$ is an integer,
    \item switching rows or columns, and
    \item scaling rows or columns by $\pm 1$.
\end{enumerate}

The Goeritz matrix is a presentation matrix for the first homology of branched double cover of $S^3$ along $K$, and therefore $mg_2(K)$ is the number of entries on the diagonal of the Smith normal form of $G_D$ that are greater than one. For example, the Smith normal form of the Goeritz matrix $G_D$ for the diagram in Figure \ref{connectsum} is
\[\left[ \begin{array}{c c c c}
1 & 0 & 0 & 0\\
0 & 1 & 0 & 0\\
0 & 0 & 3 & 0\\
0 & 0 & 0 & 3
\end{array}\right],\]
and thus $mg_2(K) = 2$ for the knot in Figure \ref{connectsum}.

Wendt \cite{Wendt} proved that $mg_2(K)\leq u(K)$ for any knot $K$. We prove the following related result using Goeritz matrices.
\begin{theorem}
\label{abs val dk}
If $J$ and $K$ are knots with diagrams $D_J$ and $D_K$ that differ by a single crossing change, then $|mg_2(J) - mg_2(K)|\leq 1$.
\end{theorem}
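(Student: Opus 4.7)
The plan is to examine how a single crossing change alters the Goeritz matrix, observe that the change has rank one, and then translate this into a bound on the minimum number of generators of the cokernel. Since $D_J$ and $D_K$ have the same underlying projection, they inherit the same checkerboard shading and the same indexing $R_0, R_1, \ldots, R_m$ of shaded regions, so $G_{D_J}$ and $G_{D_K}$ are both $m \times m$. Let $c$ be the crossing that is changed, let $R_i$ and $R_j$ be the two shaded regions meeting at $c$, and let $\epsilon = \zeta(c)$ before the change; after the change $\zeta(c) = -\epsilon$.

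The first step is to compute $E := G_{D_K} - G_{D_J}$ directly from the definition. For $i, j \geq 1$, the off-diagonal entries at $(i,j)$ and $(j,i)$ each change by $-2\epsilon$, and the column-sum convention forces the diagonal entries $(i,i)$ and $(j,j)$ each to change by $+2\epsilon$; all other entries are unchanged. Hence
\[
E = 2\epsilon(e_i - e_j)(e_i - e_j)^T,
\]
a rank-one integer matrix. If one of $\{i,j\}$ is $0$, only the single remaining diagonal entry changes and $E = \pm 2\, e_k e_k^T$, still rank one. In every case, write $E = uv^T$ for integer vectors $u, v \in \mathbb{Z}^m$.

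The second step turns this algebraic fact into a comparison of cokernels. Since $H_1(\Sigma_2(J)) \cong \operatorname{coker}(G_{D_J})$ and similarly for $K$, I consider the augmented $m \times (m+1)$ matrices $(G_{D_J} \mid u)$ and $(G_{D_K} \mid u) = (G_{D_J} + uv^T \mid u)$. Subtracting $v_\ell$ times the last column from column $\ell$, for each $\ell = 1, \ldots, m$, transforms the second matrix into the first, and integer column operations preserve the cokernel. Denoting by $[u]_J$ and $[u]_K$ the classes of $u$ in the respective cokernels, this yields a common quotient
\[
Q \;:=\; H_1(\Sigma_2(J))/\langle [u]_J \rangle \;\cong\; H_1(\Sigma_2(K))/\langle [u]_K \rangle.
\]

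To finish, I invoke a standard principle: for a finitely generated abelian group $A$ and a cyclic subgroup $C \leq A$, the minimum number of generators $d(\cdot)$ satisfies $d(A/C) \leq d(A) \leq d(A/C) + 1$, because lifting a minimal generating set of $A/C$ and adjoining a generator of $C$ gives a generating set for $A$. Since $H_1(\Sigma_2(J))$ and $H_1(\Sigma_2(K))$ are finite (the determinant of a knot is nonzero), $mg_2(J) = d(H_1(\Sigma_2(J)))$ and similarly for $K$, so chaining the two inequalities through $d(Q)$ gives $|mg_2(J) - mg_2(K)| \leq 1$. The main piece of genuine content is the rank-one identification of $E$, including the mild bookkeeping when one of the regions at $c$ is the deleted region $R_0$; once this is in hand, the remainder is a routine application of the elementary divisor correspondence.
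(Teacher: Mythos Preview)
Your proof is correct and takes a genuinely different route from the paper's. The paper first performs a local isotopy near the changed crossing, replacing it by a configuration with several new crossings and new shaded regions $R_0, R_1, R_2, R_3$, arranged so that the two Goeritz matrices differ only in a single diagonal entry (a $0$ versus a $2$ in the $(1,1)$ position); it then argues by explicit row and column operations on the resulting block forms to bound the number of nontrivial invariant factors. You instead work directly with the original diagrams, compute that the difference $G_{D_K} - G_{D_J}$ is a rank-one integer matrix $uv^T$, and use the augmented-matrix trick $(G_{D_J}\mid u) \sim (G_{D_K}\mid u)$ to exhibit a common quotient of $H_1(\Sigma_2(J))$ and $H_1(\Sigma_2(K))$ by a cyclic subgroup.

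Your approach is more conceptual: it avoids the geometric preprocessing step entirely and makes transparent why the bound is exactly $1$---a rank-one perturbation of a presentation matrix changes the minimum number of generators of the cokernel by at most one. The paper's approach is more hands-on and closer to an explicit Smith normal form computation, but it requires the reader to track the effect of the isotopy on the shading and on the matrix entries, and the special form it achieves is really just a device to make the rank-one difference appear in a single entry. Both arguments ultimately rest on the same elementary fact about cyclic quotients of finitely generated abelian groups; yours isolates it more sharply and would generalize immediately to any situation where two presentation matrices differ by a rank-one matrix.
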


\begin{proof}
Suppose that $c$ is the crossing in $D_J$ that when changed produces the diagram $D_K$. A local isotopy near the crossing $c$ can transform the diagram as in Figure \ref{fig:tanakapic}, and so we assume that $D_J$ is a diagram that looks like the diagram on the right of Figure \ref{fig:tanakapic} near the crossing $c$. The diagram $D_J$ can be checkerboard shaded so that $\zeta(c)=-1$ in $D_J$ and $\zeta(c)=1$ in $D_K$.

Assume $mg_2(K)=m$ and $mg_2(J)=n$. The Goeritz matrices of $D_K$ and $D_J$ have the following forms:
\[G_{D_K} =
  \begin{bmatrix}
    \begin{array}{c|ccccc}
  0 & 0 & -1 & 0 & \cdots & 0\\ \hline
  0 & & & & &\\
  -1 & & & & &\\
  0 & & & \text{\Large{$X$}}& &\\
\vdots & & & & & \\
  0 & & & & &\\
    \end{array}
  \end{bmatrix}~\text{and}~
  G_{D_J} = 
  \begin{bmatrix}
    \begin{array}{c|ccccc}
  2 & 0 & -1 & 0 & \cdots & 0\\ \hline
  0 & & & & &\\
  -1 & & & & &\\
  0 & & & \text{\Large{$X$}}& &\\
  \vdots & & & & & \\
  0 & & & & &\\
    \end{array}
  \end{bmatrix}.
\]
\\
A sequence of row and column operations transforms $G_{D_K}$ into
\[G'_{D_K} =
\begin{bmatrix}
\begin{array}{c | c c c}
1  & 0 & \cdots & 0 \\ \hline
0  & & & \\
\vdots  & & \text{\Large{$X_{1}$}}& \\
0  & & & \\
\end{array}
\end{bmatrix}.
\]
The same sequence of row and column operations transforms $G_{D_J}$ into

\[G'_{D_J} =
\begin{bmatrix} 
\begin{array}{c | c c c}
x' & & \text{\Large{*}}  & \\ \hline
&  & & \\
\text{\Large{*}}  & & \text{\Large{$X_{1}$}}& \\
& & & \\
\end{array}
\end{bmatrix}.
\] 
Since $mg_2(K)=n$, we can transform $G'_{D_J}$ into

\[G''_{D_J} = 
\begin{bmatrix} 
\begin{array}{c | c c c | c c c}
x & &  \text{\Large{*}} & & 0 &\cdots & 0 \\ \hline
& \alpha_1 & &  &  & &\\
\text{\Large{*}} & & \ddots & & & 0\\
& & & \alpha_n & & & \\ \hline
0 & & & & 1 & & \\
\vdots & & 0  & & & \ddots &\\
0 & & & & & & 1\\
\end{array}
\end{bmatrix}
\]
where $\alpha_i>1$ for $1\leq i\leq n$. Therefore, $m=mg_2(J)\leq n+1$. A similar argument shows that $mg_2(K)\leq m+1$. Since $m\leq n+1$ and $n\leq m+1$, it follows that $|n-m|\leq 1$.

\end{proof}

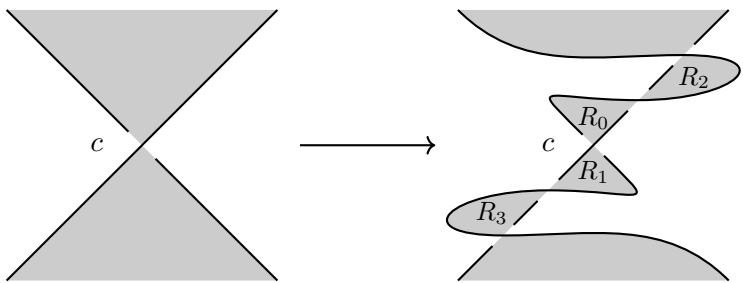
\begin{figure}[h!]
    \centering
    \[\begin{tikzpicture}[thick, scale = .6]

\fill[white!80!black] (0,0) -- (3,3) -- (6,0);
\fill[white!80!black] (0,6) -- (3,3) -- (6,6);

\draw (2,3) node{$c$};
\draw (0,0) -- (6,6);
\draw (0,6) -- (2.7,3.3);
\draw (3.3,2.7) -- (6,0);

\draw[thick, ->] (6.5,3) -- (9.5,3);

\begin{scope}[xshift = 10cm]

\coordinate (A) at (0,0);
\coordinate (B) at (1,1);
\coordinate (C) at (2,2);
\coordinate (D) at (3,3);
\coordinate (E) at (4,4);
\coordinate (F) at (5,5);
\coordinate (G) at (6,6);

\draw (2,3) node{$c$};

\fill[white!80!black] (0,6) to [out = -45, in= 180] (F) -- (G);
\fill[white!80!black] (F) to [out = 0, in = 0, looseness = 4] (E);
\fill[white!80!black] (E) to [out = 180, in = 135, looseness = 4](D);
\fill[white!80!black] (D) to [out = -45, in = 0, looseness = 4] (C);
\fill[white!80!black] (C) to [out = 180, in= 180, looseness = 4] (B);
\fill[white!80!black] (A)  -- (B) to [out = 0, in = 135] (6,0);

\draw ( 3,3.6) node{\small{$R_0$}};
\draw (3,2.4) node{\small{$R_1$}};
\draw (5.23,4.5) node{\small{$R_2$}};
\draw (.75,1.5) node{\small{$R_3$}};

\begin{scope}
	\begin{pgfinterruptboundingbox} 
		\path [invclip] (D) circle (.2cm);
	\end{pgfinterruptboundingbox}
	\draw[thick] (0,6) to [out =-45 , in = 180, looseness=1]
	(5,5) to [out = 0, in = 0, looseness=4]
	(4,4) to [out = 180, in = 135, looseness = 4]
	(3,3) to [out = -45, in = 0, looseness = 4]
	(2,2) to [out = 180, in = 180, looseness = 4]
	(1,1) to [out = 0, in = 135, looseness = 1]
	(6,0);
	
\end{scope}

\begin{scope}
	\begin{pgfinterruptboundingbox} 
		\path [invclip] (B) circle (.2cm);
		\path [invclip] (C) circle (.2cm);
		\path [invclip] (E) circle (.2cm);
		\path [invclip] (F) circle (.2cm);
	\end{pgfinterruptboundingbox}
	\draw[thick] (0,0) -- (6,6);
	
\end{scope}

\end{scope}

\end{tikzpicture}\]
    \caption{An isotopy near the crossing $c$ makes the Goeritz matrices of $D_J$ and $D_K$ have the desired format.}
    \label{fig:tanakapic}
\end{figure}

Tanaka \cite{Tanaka} proved the following theorem about $mg_2(K)$ and the region index of a knot.
\begin{theorem}[Tanaka]
\label{theorem:Tanaka}
For any knot $K$, $mg_2(K)<\Reg(K).$
\end{theorem}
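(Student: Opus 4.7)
The plan is to take a diagram $D$ of $K$ realizing $\Reg(K) = k$ together with an unknotting region $R$ of $D$ having exactly $k$ crossings on its boundary, and to compare the Goeritz matrix $G_D$ of $D$ with the Goeritz matrix $G_{D_U}$ of the unknot diagram $D_U$ obtained from $D$ by a region crossing change at $R$. I would first choose the checkerboard shading that makes $R$ shaded; since different choices of the deleted shaded region yield presentation matrices of the same abelian group $H_1(\Sigma_2(K))$, I would then take $R$ itself to be the deleted region $R_0$. A region crossing change at $R_0$ flips $\zeta(c) \to -\zeta(c)$ only at the crossings lying on $\partial R_0$, and each such crossing lies between $R_0$ and exactly one other shaded region. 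Writing $p_i := \sum_{c \in \partial R_0 \cap \partial R_i} \zeta(c)$ for $i \geq 1$, a direct bookkeeping of the pre-Goeritz entries shows that $G_D$ and $G_{D_U}$ agree off the diagonal and satisfy $(G_{D_U})_{ii} - (G_D)_{ii} = 2p_i$ on the diagonal, so $G_D = G_{D_U} + \Delta$ with $\Delta = \operatorname{diag}(-2p_1, \ldots, -2p_m)$. Setting $I_0 = \{i \geq 1 : p_i \neq 0\}$ and $s = |I_0|$, every crossing on $\partial R_0$ contributes to exactly one $p_i$, so $s \leq k$.

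Since $D_U$ is a diagram of the unknot, $|\det G_{D_U}| = 1$, so $G_{D_U}$ is invertible over the integers and $\operatorname{coker}(G_D) \cong \operatorname{coker}(A)$ where $A := I + G_{D_U}^{-1}\Delta$. Direct computation gives $Ae_j = e_j$ for $j \notin I_0$ and $Ae_j = e_j - 2p_j G_{D_U}^{-1} e_j$ for $j \in I_0$. Passing to the cokernel, the first family of equations forces $e_j \equiv 0$ for $j \notin I_0$, so $\operatorname{coker}(G_D)$ is generated by the $s$ classes $\{e_i : i \in I_0\}$, giving the preliminary bound $mg_2(K) \leq s$.

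To upgrade this to strict inequality, I would extract one further relation from the row-sum-zero property of the pre-Goeritz matrix. The identity $\sum_{j=0}^m (\overline{G}_D)_{ij} = 0$ restricts on the Goeritz block to $G_D \mathbf{1} = -p$, where $\mathbf{1} = (1,\ldots,1)^T$ and $p$ has $i$th entry $p_i$; hence the relation $\sum_{i \in I_0} p_i e_i = 0$ holds in $\operatorname{coker}(G_D)$. If $s < k$, the preliminary bound already gives $mg_2(K) \leq s \leq k - 1$. If instead $s = k$, then by a pigeonhole count $R_0$ must share exactly one crossing with each $R_i$ for $i \in I_0$, forcing $p_i = \pm 1$; the relation $\sum_{i \in I_0} p_i e_i = 0$ then has unit coefficients, so it expresses one of the $s$ generators as an integer combination of the other $s - 1$, reducing the generator count to $s - 1 = k - 1$. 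In either case $mg_2(K) \leq k - 1 < k = \Reg(K)$, as desired.

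The principal hurdle will be the bookkeeping of the first step: one must verify that the region crossing change at the deleted shaded region $R_0$ produces a purely diagonal change in $G_D$, which requires tracking how the pre-Goeritz diagonal entries (themselves defined as negative column sums) compensate for the sign flips on the first row and column. Once the formula $G_D = G_{D_U} + \Delta$ is in hand, the remaining steps reduce to an elementary rank-$s$ perturbation-of-identity computation, together with the crucial observation that saturating $s = k$ forces each $|p_i|$ to equal one, making the row-sum relation primitive and providing the saving of one generator that upgrades $\leq k$ to $\leq k - 1$.
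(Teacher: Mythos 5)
The paper does not actually prove this theorem; it is quoted from Tanaka \cite{Tanaka} and used as a black box, so there is no in-paper argument to compare against. Your strategy is the natural one and is essentially sound: taking the unknotting region $R$ as the deleted shaded region $R_0$, only the entries $(\overline{G}_D)_{0i}$ change sign under the region crossing change, and because each diagonal entry of the pre-Goeritz matrix is the negative column sum, one gets exactly $G_D=G_{D_U}+\Delta$ with $\Delta=\operatorname{diag}(-2p_1,\dots,-2p_m)$. The count $s\le k$, the unimodularity of $G_{D_U}$ for the unknot, the row-sum relation $G_D\mathbf{1}=-p$, and the pigeonhole argument forcing $p_i=\pm1$ when $s=k$ all check out.

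The one step you must repair is the identification of cokernels in the final elimination. You establish $e_j\equiv 0$ for $j\notin I_0$ in $\operatorname{coker}(A)$ with $A=I+G_{D_U}^{-1}\Delta$, but the isomorphism $\operatorname{coker}(G_D)\cong\operatorname{coker}(A)$ arising from $G_D=G_{D_U}A$ is induced by $G_{D_U}$ and does not carry $e_i$ to $e_i$, whereas your relation $\sum_{i\in I_0}p_ie_i\equiv 0$ lives in $\operatorname{coker}(G_D)$ in the standard basis; as written, the two facts you combine sit in different presentations. The fix is immediate from symmetry: since $G_{D_U}$ and $\Delta$ are symmetric, $G_D=(I+\Delta G_{D_U}^{-1})G_{D_U}$, hence $\operatorname{im}(G_D)=\operatorname{im}(I+\Delta G_{D_U}^{-1})$ as subgroups of $\mathbb{Z}^m$; the $j$th column of $I+\Delta G_{D_U}^{-1}$ is $e_j$ plus a vector supported on $I_0$, so $\operatorname{coker}(G_D)$ itself is generated by $\{e_i:i\in I_0\}$, and the relation $\sum_{i\in I_0} p_ie_i=0$ may then legitimately be used to discard one generator. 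A second, minor omission: a crossing with $R_0$ at both shaded corners contributes to no $p_i$ and to no entry of the Goeritz matrix, so it only pushes you into the easy case $s<k$, but it should be acknowledged. With these repairs the argument is complete.
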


Theorems \ref{abs val dk} and \ref{theorem:Tanaka} are now used to prove Theorem \ref{mg_2}.
\begin{proof}[Proof of Theorem \ref{mg_2}]
Let $D$ be a diagram of $K$ such that $\MRI(D)=\MRI(K)$. Then $D$ contains a set of unknotting regions $\{R_{i_1},\dots,R_{i_k}\}$ such that 
\[\MRI(K)=\sum_{j=1}^k c(R_{i_j})\]
where $c(R_{i_j})$ is the number of crossings in the boundary of the region $R_{i_j}$.  

Let $D_1$ be the diagram obtained from $D$ by performing region crossing changes on $R_{i_1}, \dots, R_{i_{k-1}}$, and let $K_1$ be the knot with diagram $D_1$. Then $R_{i_k}$ is an unknotting region of $D_1$ with $c(R_{i_k})$ crossings in its boundary, and thus $\Reg(K_1) \leq c(R_{i_k})$. Theorem \ref{theorem:Tanaka} implies that $mg_2(K_1)<\Reg(K_1)\leq c(R_{i_k})$

The diagram $D$ can be obtained from the diagram $D_1$ via $\sum_{j=1}^{k-1} c(R_{i_j})$ crossing changes. Theorem \ref{abs val dk} implies that 
\[\mid mg_2(K) - mg_2(K_1)\mid  \leq \sum_{j=1}^{k-1} c(R_{i_j}).\]
Thus \[mg_2(K) < \sum_{j=1}^{k} c(R_{i_j}) = \MRI(K).\]
\end{proof}

\section{Computations and open questions}
\label{section:Conclusion}

In this section, we compute the multi-region index for some interesting families of knots. We also discuss some natural open questions related to the multi-region index.

In the following example, we show there is an infinite family of knots $K_i$ each with a pair of minimal crossing diagrams $D_i$ and $D_i'$ such that the multi-region index of $D_i$ is different than the multi-region index of $D_i'$ (and likewise for the region indices of $D_i$ and $D_i'$). The diagrams $D_i$ and $D_i'$ can be transformed into one another via a flype. The unknotting number of a diagram is preserved by flyping, but the following example shows the same is not true for the region index and multi-region index of a diagram. 

\begin{example} For each positive integer $i$, define $K_i$ to be the knot with diagrams $D_i$ and $D_i'$, as depicted in Figure   \ref{figure:InfiniteFlype}. There are $2i+1$ crossings in the twist regions of $D_i$ and $D_i'$, and the knot $K_0$ is $8_{15}$. In each diagram, the shaded region is an unknotting region. One can show that these shaded regions minimize the sum in Equation \ref{equation:MRI(D)} by performing region crossing changes on sets of regions with fewer crossings, computing the determinant of the resulting knot, and seeing that those determinants are not one. Therefore, $\MRI(D_i)=\Reg(D_i)=3$ while $\MRI(D_i')=\Reg(D_i')=4$. 
\end{example}
\begin{figure}[h!]
\[\begin{tikzpicture}[outer sep = 0mm, scale = .3]

\def\gap{.15cm}

\coordinate (A) at (0,8);
\coordinate (B) at (5,8);
\coordinate (C) at (8,10);
\coordinate (D) at (8,6);
\coordinate (E) at (15,7);
\coordinate (F) at (12,5);
\coordinate (G) at (10, 2);
\coordinate (H) at (8,0);
\coordinate (I) at (12,0);

\fill[white!80!black] (D) to [out = 272, in = 133] (G) to [out = 33, in= 270] (F) to [out = 186, in = -23] (D);

\draw [
    thick,
    decoration={
        brace,
        mirror,
        amplitude = 10pt,
        raise=0.5cm
    },
    decorate
] (0,7.6) -- (5,7.6);
\draw (2.5,4.5) node{\footnotesize{$2i+1$}};

\fill (2.5,8) circle (.1cm);
\fill (2,8) circle (.1cm);
\fill (3,8) circle (.1cm);

\begin{scope}
		\begin{pgfinterruptboundingbox} 
		\path [invclip] (A) circle (\gap);
	\end{pgfinterruptboundingbox}
	\draw[thick] (A) to [out = 45, in = 210, looseness=1] (1,9);
\end{scope}

\begin{scope}
		\begin{pgfinterruptboundingbox} 
		\path [invclip] (C) circle (\gap);
	\end{pgfinterruptboundingbox}
	\draw[thick] (1,7) to [out angle = 150, in angle =120, curve through={(A)}, ]  (C);
\end{scope}

\begin{scope}
	\begin{pgfinterruptboundingbox} 
		\path [invclip] (B) circle (\gap);
		\path [invclip] (E) circle (\gap);
	\end{pgfinterruptboundingbox}
	\draw [thick] (4,7) to [out angle = 30, in angle = 60, curve through = {(B) .. (C)}] (E);
\end{scope}

\begin{scope}
	\begin{pgfinterruptboundingbox} 
		\path [invclip] (D) circle (\gap);
		\path [invclip] (E) circle (\gap);
	\end{pgfinterruptboundingbox}
	\draw [thick] (4,9) to [out angle = -30,  in angle = 240, curve through={(B) .. (D) .. (F)}] (E);
\end{scope}

\begin{scope}
	\begin{pgfinterruptboundingbox} 
		\path [invclip] (F) circle (\gap);
		\path [invclip] (I) circle (\gap);
	\end{pgfinterruptboundingbox}
	\draw [thick] (F) to [out angle = 90, in angle =15, curve through = {(E)}] (I);
\end{scope}	

\begin{scope}
	\begin{pgfinterruptboundingbox} 
		\path [invclip] (A) circle (\gap);
		\path [invclip] (I) circle (\gap);
	\end{pgfinterruptboundingbox}
	\draw[thick] (1,9) to [out angle = 210, in angle = 195, curve through = {(A) .. (H)}] (I);

\end{scope}

\begin{scope}
	\begin{pgfinterruptboundingbox} 
		\path [invclip] (F) circle (\gap);
		\path [invclip] (H) circle (\gap);
	\end{pgfinterruptboundingbox}
	\draw[thick] (F) to [out angle = 270, in angle = 90, curve through = {(G)}] (H);
\end{scope}

\begin{scope}
	\begin{pgfinterruptboundingbox} 
		\path [invclip] (G) circle (\gap);
		\path [invclip] (H) circle (\gap);
	\end{pgfinterruptboundingbox}
	\draw[thick] (G) to [out angle = 315, in angle = 270, curve through = {(I)}] (H);

\end{scope}

\begin{scope}
	\begin{pgfinterruptboundingbox} 
		\path [invclip] (C) circle (\gap);
		\path [invclip] (G) circle (\gap);
	\end{pgfinterruptboundingbox}
	\draw[thick] (C) to [out angle = 300, in angle = 135, curve through = {(D)}](G);

\end{scope}

\draw(7.5,-2.5) node{$D_i$};

\begin{scope}[xshift = 22cm]
\draw(7.5, -2.5) node{$D_i'$};
\coordinate (A) at (0,8);
\coordinate (B) at (5,8);
\coordinate (C) at (8,10);
\coordinate (D) at (8,6);
\coordinate (E) at (15,7);
\coordinate (F) at (12,5);
\coordinate (G) at (9,3);
\coordinate (H) at (11,3);
\coordinate (I) at (10,0);


\draw [
    thick,
    decoration={
        brace,
        mirror,
        amplitude = 10pt,
        raise=0.5cm
    },
    decorate
] (0,7.6) -- (5,7.6);
\draw (2.5,4.5) node{\footnotesize{$2i+1$}};

\fill (2.5,8) circle (.1cm);
\fill (2,8) circle (.1cm);
\fill (3,8) circle (.1cm);

\fill[white!80!black]  (E)  to [out = 260, in = -2]  (F) to [out = 255, in = 35] (H) to [out = -40, in = 35] (I) to [out = -45, in = -15, looseness=2.2]  (E);

\begin{scope}
	\path[clip] (1,9) to [out angle = 210, in angle = -45, curve through = {(A) .. (I)}] (11.5,3) to [out = 25, in = 270] (12.25,4.75) to [out=-2, in =260]  (E) -- (20,7)  -- (20,-2) -- (1,-2) -- (1,9);
	\fill[white!80!black](I) to [out angle = -45, in angle = 80, curve through = {(E)}] (F);
\end{scope}

\begin{scope}
	\begin{pgfinterruptboundingbox} 
		\path [invclip] (C) circle (\gap);
	\end{pgfinterruptboundingbox}
	\draw[thick] (1,7) to [out angle =150, in angle = 120, curve through = {(A)}](C);

\end{scope}

\begin{scope}
	\begin{pgfinterruptboundingbox} 
		\path [invclip] (A) circle (\gap);
		\path [invclip] (H) circle (\gap);
	\end{pgfinterruptboundingbox}
	\draw[thick] (1,9) to [out angle = 210, in angle = -45, curve through = {(A) .. (I)}] (H);

\end{scope}

\begin{scope}
	\begin{pgfinterruptboundingbox} 
		\path [invclip] (I) circle (\gap);
		\path [invclip] (H) circle (\gap);
	\end{pgfinterruptboundingbox}
	\draw[thick] (H) to [out angle = 45, in angle = 135, curve through = {(G)}] (I);

\end{scope}

\begin{scope}
	\begin{pgfinterruptboundingbox} 
		\path [invclip] (I) circle (\gap);
		\path [invclip] (F) circle (\gap);
	\end{pgfinterruptboundingbox}
	\draw[thick] (I) to [out angle = -45, in angle = 80, curve through = {(E)}] (F);

\end{scope}

\begin{scope}
	\begin{pgfinterruptboundingbox} 
		\path [invclip] (G) circle (\gap);
		\path [invclip] (F) circle (\gap);
	\end{pgfinterruptboundingbox}
	\draw[thick] (F) to [out angle = 260, in angle = -45, curve through = {(H)}] (G);
\end{scope}

\begin{scope}
	\begin{pgfinterruptboundingbox} 
		\path [invclip] (G) circle (\gap);
		\path [invclip] (C) circle (\gap);
	\end{pgfinterruptboundingbox}
	\draw[thick] (G) to [out angle = 135, in angle = -60, curve through = {(D)}] (C);
\end{scope}

\begin{scope}
	\begin{pgfinterruptboundingbox} 
		\path [invclip] (B) circle (\gap);
		\path [invclip] (E) circle (\gap);
	\end{pgfinterruptboundingbox}
	\draw[thick] (4,7) to [out angle = 30, in angle = 80, curve through = {(B) .. (C)}] (E);
\end{scope}

\begin{scope}
	\begin{pgfinterruptboundingbox} 
		\path [invclip] (D) circle (\gap);
	\end{pgfinterruptboundingbox}
	\draw[thick] (4,9) to [out angle = -30, in angle = 170, curve through = {(B)}] (D);
	
\end{scope}

\begin{scope}
	\begin{pgfinterruptboundingbox} 
		\path [invclip] (D) circle (\gap);
		\path [invclip] (E) circle (\gap);
	\end{pgfinterruptboundingbox}
	\draw[thick] (D) to [out angle = -10, in angle = 260, curve through = {(F)}] (E);
	
\end{scope}

\end{scope}
\end{tikzpicture}\]

\caption{The region index and the multi-region index of a diagram can change under flypes.}
\label{figure:InfiniteFlype}
\end{figure}
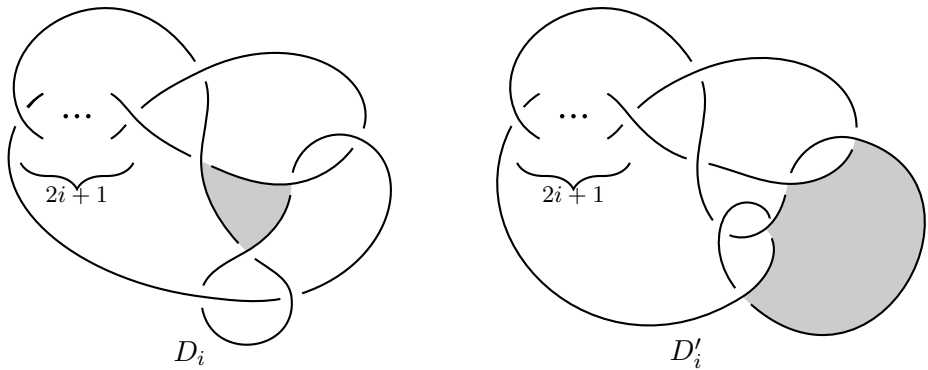

Tanaka \cite{Tanaka} found an infinite family of knots $K_n$ where $n$ is a positive integer such that the region index of $K_n$ is $n+1$. We remark that the same technique yields $\MRI(K_n)=n+1$.
\begin{example}
\label{example:Tanaka}
Let $T_{2,3}$ denote the $(2,3)$-torus knot, i.e. the right-handed trefoil, and let $T_{2,-3}$ denote the $(2,-3)$-torus knot, i.e. the left-handed trefoil. If $n=2k$ is even, define $K_n=kT_{2,3}\# kT_{2,-3}$, that is $K_n$ is the connected sum of $k$ copies of $T_{2,3}$ and $k$ copies of $T_{2,-3}$. If $n=2k+1$ is odd, then let $K_n= (k+1)T_{2,3}\# k T_{2,-3}$. Tanaka showed that $mg_2(K_n)=n$ and that $\Reg(K_n)=n+1$. Therefore $\MRI(K_n)=n+1$.
\end{example}

In the following example, we use Inequality \ref{inequality:unknotting} to compute the multi-region index of many two-stranded torus knots.
\begin{example}
\label{example:torus}
The unknotting number of the $(2,2k+1)$-torus knot $T_{2,2k+1}$ is $u(T_{2,2k+1})=k$. The standard diagram of $T_{2,2k+1}$ has two regions with all $2k+1$ crossings in their boundary and $2k+1$ regions with two crossings each in their boundary. Performing a region crossing change on $\lceil k/2 \rceil$ regions with two crossings each in their boundary where none of the regions changed share a crossing in their boundaries results in a diagram of the unknot. Therefore, if $k\geq 2$, then 
\[ \MRI(T_{2,2k+1}) = \begin{cases}k & \text{if $k$ is even,}\\
k~\text{or}~k+1&\text{if $k$ is odd.}
\end{cases}
\]
When $k=1$, Inequality \ref{inequality:1} implies that $\MRI(T_{2,3})=2$. See Figures \ref{figure:table1} and \ref{figure:table2} for the specific examples of $T_{2,3}$, $T_{2,5}$, and $T_{2,9}$ (knots $3_1, 5_1,$ and $9_1$ respectively). 
\end{example}

The following example shows that there is a diagram where the region index is finite and the multi-region index is strictly less than the region index.
\begin{example}
\label{example:8_1}
Let $D_{8_1}$ be the diagram of the knot $8_1$ depicted in Figure \ref{figure:8_1}. The shaded regions of the diagram on the left of Figure \ref{figure:8_1} minimize Equation \ref{equation:MRI(D)} and show that $\MRI(D_{8_1})=5$. The shaded region of diagram on the right of Figure \ref{figure:8_1} is an unknotting region with as few crossings as possible in its boundary. Thus $\Reg(D_{8_1})=7$.
\end{example}
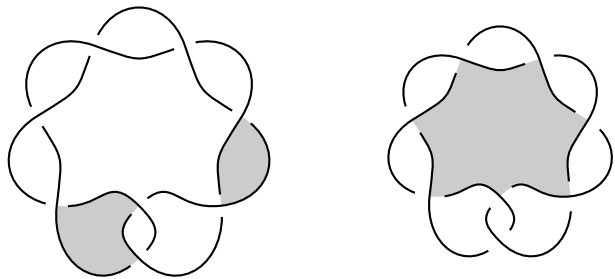
\begin{figure}[h]
\[\begin{tikzpicture}[thick]
\begin{scope}[outer sep = 0mm, scale = .7]

\def\gap{.15cm}

\coordinate (A) at (270:2.25);
\coordinate (B) at (270:1.25);
\coordinate (C) at (270+360/7:2);
\coordinate (D) at (270 + 2*360/7:2);
\coordinate (E) at (270 + 3*360/7:2);
\coordinate (F) at (270 + 4*360/7:2);
\coordinate (G) at (270 + 5*360/7:2);
\coordinate (H) at (270 + 6*360/7:2);


\fill[white!80!black, looseness=1.5] (B) to [out = 225, in = 135] (A) to [out =225, in =  315+6*360/7] (H) to [out = 45+6*360/7, in = 135] (B);

\fill[white!80!black, looseness=1.5] (C) to [out = 315+360/7, in = 225+2*360/7] (D) to [out =135+2*360/7, in = 45+360/7] (C);

\begin{scope}[looseness=1.5]
	\begin{pgfinterruptboundingbox} 
		\path [invclip] (A) circle (\gap);
		\path [invclip] (H) circle (\gap);
	\end{pgfinterruptboundingbox}
	\draw[thick] (H) to [out = 45+6*360/7, in = 135] (B) to [out = -45, in = 45] (A);
\end{scope}

\begin{scope}[looseness=1.5]
	\begin{pgfinterruptboundingbox} 
		\path [invclip] (A) circle (\gap);
		\path [invclip] (G) circle (\gap);
	\end{pgfinterruptboundingbox}
	\draw[thick] (G) to [out = 45+5*360/7, in = 135+6*360/7] (H) to [out = 315+6*360/7, in = 225] (A);
\end{scope}

\begin{scope}[looseness=1.5]
	\begin{pgfinterruptboundingbox} 
		\path [invclip] (F) circle (\gap);
		\path [invclip] (H) circle (\gap);
	\end{pgfinterruptboundingbox}
	\draw[thick] (F) to [out = 45+4*360/7, in = 135+5*360/7] (G) to [out = 315+5*360/7, in = 225+6*360/7] (H);
\end{scope}

\begin{scope}[looseness=1.5]
	\begin{pgfinterruptboundingbox} 
		\path [invclip] (E) circle (\gap);
		\path [invclip] (G) circle (\gap);
	\end{pgfinterruptboundingbox}
	\draw[thick] (E) to [out = 45+3*360/7, in = 135+4*360/7] (F) to [out = 315+4*360/7, in = 225+5*360/7] (G);
\end{scope}

\begin{scope}[looseness=1.5]
	\begin{pgfinterruptboundingbox} 
		\path [invclip] (F) circle (\gap);
		\path [invclip] (D) circle (\gap);
	\end{pgfinterruptboundingbox}
	\draw[thick] (D) to [out = 45+2*360/7, in = 135+3*360/7] (E) to [out = 315+3*360/7, in = 225+4*360/7] (F);
\end{scope}

\begin{scope}[looseness=1.5]
	\begin{pgfinterruptboundingbox} 
		\path [invclip] (E) circle (\gap);
		\path [invclip] (C) circle (\gap);
	\end{pgfinterruptboundingbox}
	\draw[thick] (C) to [out = 45+360/7, in = 135+2*360/7] (D) to [out = 315+2*360/7, in = 225+3*360/7] (E);
\end{scope}

\begin{scope}[looseness=1.5]
	\begin{pgfinterruptboundingbox} 
		\path [invclip] (B) circle (\gap);
		\path [invclip] (D) circle (\gap);
	\end{pgfinterruptboundingbox}
	\draw[thick] (B) to [out = 45, in = 135+360/7] (C) to [out = 315+360/7, in = 225+2*360/7] (D);
\end{scope}

\begin{scope}[looseness=1.5]
	\begin{pgfinterruptboundingbox} 
		\path [invclip] (B) circle (\gap);
		\path [invclip] (C) circle (\gap);
	\end{pgfinterruptboundingbox}
	\draw[thick] (B) to [out = 225, in = 135] (A) to [out = -45, in = 225+360/7] (C);
\end{scope}

\end{scope}

\begin{scope}[outer sep = 0mm, scale = .6, xshift = 8cm]

\def\gap{.2cm}

\coordinate (A) at (270:2.25);
\coordinate (B) at (270:1.25);
\coordinate (C) at (270+360/7:2);
\coordinate (D) at (270 + 2*360/7:2);
\coordinate (E) at (270 + 3*360/7:2);
\coordinate (F) at (270 + 4*360/7:2);
\coordinate (G) at (270 + 5*360/7:2);
\coordinate (H) at (270 + 6*360/7:2);


\fill[white!80!black, looseness=1.5] (B) to [out = 45, in = 135+360/7] (C) to [out = 45+360/7, in = 135+2*360/7] (D) to [out = 45+2*360/7, in = 135+3*360/7] (E) to [out = 45+3*360/7, in = 135+4*360/7] (F) to [out = 45+4*360/7, in = 135+5*360/7] (G) to [out = 45+5*360/7, in = 135+6*360/7] (H)  to [out = 45+6*360/7, in = 135] (B);

\begin{scope}[looseness=1.5]
	\begin{pgfinterruptboundingbox} 
		\path [invclip] (A) circle (\gap);
		\path [invclip] (H) circle (\gap);
	\end{pgfinterruptboundingbox}
	\draw[thick] (H) to [out = 45+6*360/7, in = 135] (B) to [out = -45, in = 45] (A);
\end{scope}

\begin{scope}[looseness=1.5]
	\begin{pgfinterruptboundingbox} 
		\path [invclip] (A) circle (\gap);
		\path [invclip] (G) circle (\gap);
	\end{pgfinterruptboundingbox}
	\draw[thick] (G) to [out = 45+5*360/7, in = 135+6*360/7] (H) to [out = 315+6*360/7, in = 225] (A);
\end{scope}

\begin{scope}[looseness=1.5]
	\begin{pgfinterruptboundingbox} 
		\path [invclip] (F) circle (\gap);
		\path [invclip] (H) circle (\gap);
	\end{pgfinterruptboundingbox}
	\draw[thick] (F) to [out = 45+4*360/7, in = 135+5*360/7] (G) to [out = 315+5*360/7, in = 225+6*360/7] (H);
\end{scope}

\begin{scope}[looseness=1.5]
	\begin{pgfinterruptboundingbox} 
		\path [invclip] (E) circle (\gap);
		\path [invclip] (G) circle (\gap);
	\end{pgfinterruptboundingbox}
	\draw[thick] (E) to [out = 45+3*360/7, in = 135+4*360/7] (F) to [out = 315+4*360/7, in = 225+5*360/7] (G);
\end{scope}

\begin{scope}[looseness=1.5]
	\begin{pgfinterruptboundingbox} 
		\path [invclip] (F) circle (\gap);
		\path [invclip] (D) circle (\gap);
	\end{pgfinterruptboundingbox}
	\draw[thick] (D) to [out = 45+2*360/7, in = 135+3*360/7] (E) to [out = 315+3*360/7, in = 225+4*360/7] (F);
\end{scope}

\begin{scope}[looseness=1.5]
	\begin{pgfinterruptboundingbox} 
		\path [invclip] (E) circle (\gap);
		\path [invclip] (C) circle (\gap);
	\end{pgfinterruptboundingbox}
	\draw[thick] (C) to [out = 45+360/7, in = 135+2*360/7] (D) to [out = 315+2*360/7, in = 225+3*360/7] (E);
\end{scope}

\begin{scope}[looseness=1.5]
	\begin{pgfinterruptboundingbox} 
		\path [invclip] (B) circle (\gap);
		\path [invclip] (D) circle (\gap);
	\end{pgfinterruptboundingbox}
	\draw[thick] (B) to [out = 45, in = 135+360/7] (C) to [out = 315+360/7, in = 225+2*360/7] (D);
\end{scope}

\begin{scope}[looseness=1.5]
	\begin{pgfinterruptboundingbox} 
		\path [invclip] (B) circle (\gap);
		\path [invclip] (C) circle (\gap);
	\end{pgfinterruptboundingbox}
	\draw[thick] (B) to [out = 225, in = 135] (A) to [out = -45, in = 225+360/7] (C);
\end{scope}

\end{scope}

\end{tikzpicture}\]
\caption{The shaded regions of the diagram $D_{8_1}$ of $8_1$ on the left minimize Equation \ref{equation:MRI(D)}. The shaded region of the diagram $D_{8_1}$ of $8_1$ on the right is an unknotting region with as few crossings as possible for this diagram.}
\label{figure:8_1}
\end{figure}

In Example \ref{example:Tanaka}, we have $\MRI(K_n)=\Reg(K_n)=n+1$. On the other hand, in Examples \ref{example:torus} and \ref{example:8_1}, we have diagrams where the multi-region index is strictly less than the region index. The knots in Examples \ref{example:torus} and \ref{example:8_1} are candidates for the following open question.
\begin{question}
Is there a knot with $\MRI(K) < \Reg(K)?$ Can the difference $\Reg(K) - \MRI(K)$ be arbitrarily large?
\end{question}
In light of this question, we have the following result.

\begin{theorem}
\label{mri<reg}
If $K$ is a knot such that $\MRI(K) < \Reg(K)$, then $\MRI(K)\geq 4$ and $\Reg(K) \geq 5$.
\end{theorem}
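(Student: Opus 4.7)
The plan is as follows. Because $\MRI(K)$ and $\Reg(K)$ are nonnegative integers, the hypothesis $\MRI(K) < \Reg(K)$ already forces $\Reg(K) \geq \MRI(K) + 1$. Hence it suffices to prove $\MRI(K) \geq 4$, and the bound $\Reg(K) \geq 5$ follows automatically. I argue by contradiction, assuming $\MRI(K) \leq 3$. Note that $K$ must be nontrivial (the unknot has $\MRI = 0$, violating the hypothesis), so Inequality \ref{inequality:1} gives $\MRI(K) \in \{2, 3\}$. Fix a diagram $D$ of $K$ with $\MRI(D) = \MRI(K)$ together with a minimizing set of unknotting regions $S = \{R_{i_1}, \dots, R_{i_k}\}$.

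The central technical input is a \emph{monogon lemma}. If a region $R$ satisfies $c(R) = 1$, then $R$ is a monogon face of the $4$-regular plane graph $|D|$, bounded by a single self-loop at a nugatory crossing. Moreover, a region crossing change at $R$ is realized by an ambient isotopy of the knot, because flipping the sign of a nugatory crossing amounts to two canceling Reidemeister 1 moves, as pictured in Figure \ref{r1twist}. Since region crossing changes never alter the plane graph $|D|$, any monogon of $D$ remains a monogon after further region crossing changes, and the crossing it bounds remains nugatory. Consequently, any monogon in $S$ can be deleted from $S$ while preserving the property that $S$ unknots $D$.

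Given this lemma, the remaining work is a short case analysis based on the partition of $\MRI(K) \in \{2, 3\}$ into the crossing counts $c(R_{i_j})$. In case (i), $S = \{R\}$ is a singleton with $c(R) = \MRI(K)$; here $R$ is an unknotting region of $D$, giving $\Reg(K) \leq \Reg(D) \leq c(R) = \MRI(K)$ and contradicting $\MRI(K) < \Reg(K)$. In case (ii), $\MRI(K) = 3$ and $S = \{R_1, R_2\}$ with $c(R_1) = 2$ and $c(R_2) = 1$; deleting the monogon $R_2$ leaves $\{R_1\}$ as an unknotting set, so $R_1$ is an unknotting region of size $2$, forcing $\Reg(D) \leq 2 < \MRI(D)$ and contradicting $\MRI(D) \leq \Reg(D)$. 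In case (iii), every region in $S$ is a monogon, and iterating the lemma makes the full region crossing change on $S$ an ambient isotopy of $D$; then $D$ already represents the unknot, contradicting nontriviality of $K$.

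The main obstacle I expect is rigorously establishing the monogon lemma: one must confirm that $c(R) = 1$ genuinely forces $R$ to be a standard single-self-loop monogon in the connected $4$-regular plane graph $|D|$ (ruling out pathological configurations with multiple self-loops at a single vertex) and then produce an explicit ambient isotopy that realizes the sign flip of a nugatory crossing. Once this lemma is secured, the case analysis above reduces to a routine enumeration of the integer partitions of $2$ and $3$.
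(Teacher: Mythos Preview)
Your proof is correct and follows essentially the same approach as the paper: both argue that a region with one crossing in its boundary contributes nothing to unknotting (the paper states this directly, you formalize it as a ``monogon lemma''), and both then conclude via partition analysis that $\MRI(K)\in\{2,3\}$ forces a single unknotting region of that size, whence $\Reg(K)=\MRI(K)$. The obstacle you flag is not a real one --- the monogon lemma is exactly the content of Figure~\ref{r1twist} and the discussion establishing Inequality~\ref{inequality:1}, so no further justification is needed.
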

\begin{proof}
Performing a region crossing change on a region with only one crossing in its boundary does not change the knot type. Thus if $\MRI(K)=2$, then $K$ has a diagram where changing one region with two crossings in its boundary transforms $K$ into the unknot (and not two regions each with one crossing in their boundaries). Similarly, if $\MRI(K)=3$, then $K$ has a diagram where changing one region with three crossings in its boundary transforms $K$ into the unknot (and not three regions each with one crossing in their boundaries or two regions, one with two crossings and one with one crossing in their boundaries). Therefore $\MRI(K)=2$ if and only if $\Reg(K)=2$, and $\MRI(K)=3$ if and only if $\Reg(K)=3$, implying the result.
\end{proof}

Because $mg_2(K)\leq u(K)\leq \MRI(K)$ and $mg_2(K)<\MRI(K)$ are our main tools for computing the multi-region index of a knot, it is difficult to find knots whose unknotting number and multi-region index are far apart.
\begin{question}
Can the differences $\MRI(K)-u(K)$ and $\Reg(K) - u(K)$ be arbitrarily far apart?
\end{question}

Kawauchi, Kishimoto, and Shimizu \cite{KKS} proved the following theorem about the region index of a knot.
\begin{theorem}[Kawauchi, Kishimoto, Shimizu]
\label{KKSTheorem} 
Let $\sigma(K)$ be the signature of the knot $K$, and let $a_2(K)$ be the coefficient of the quadratic term of the Conway polynomial of $K$. If $\sigma(K)=0$ and $a_2(K)\equiv 1\mod 2$, then $\Reg(K)\geq 3$.
\end{theorem}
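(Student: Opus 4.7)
The plan is to argue the contrapositive: assuming $\Reg(K)\le 2$, show that either $\sigma(K)\neq 0$ or $a_2(K)$ is even. Fix a diagram $D$ of $K$ containing an unknotting region $R$ with $c(R)=\Reg(K)\le 2$. If $c(R)=1$, then the region crossing change at $R$ simply exchanges one Reidemeister~1 twist for the opposite type (as in the discussion preceding Inequality \ref{inequality:1}), so it does not alter the knot type, and thus $K$ must already be the unknot; hence $\sigma(K)=0$ and $a_2(K)=0$. If $c(R)=2$ but the two entries of $\partial R$ come from the same (necessarily nugatory) crossing, flipping that crossing twice returns the diagram to itself, so once again $K$ is the unknot.

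The substantive case is that $R$ is a bigon bounded by two distinct crossings $c_1$ and $c_2$. After orienting $K$, I would split into sub-cases based on whether the two strands cross the bigon in parallel or antiparallel directions. In the parallel (coherent) sub-case, $c_1$ and $c_2$ have opposite signs and $R$ is a Reidemeister~2 bigon; flipping both crossings yields another Reidemeister~2 bigon, and removing it by a Reidemeister~2 move realizes the region crossing change as an ambient isotopy of $K$. Thus $K$ is the unknot once more.

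The main obstacle is the antiparallel (clasp) sub-case, in which $c_1$ and $c_2$ share a sign. Here my plan is to couple the change of the Conway polynomial with the change of signature. Applying the Conway skein relation $\nabla_{L_+}-\nabla_{L_-}=z\,\nabla_{L_0}$ first at $c_1$ and then at $c_2$ (in the intermediate diagram) and reading off the coefficient of $z^2$ gives $a_2(K)=\pm\operatorname{lk}(L_0^{(1)})\pm\operatorname{lk}(L_0^{(2)})$, where $L_0^{(j)}$ is the oriented resolution at $c_j$. The clasp structure tightly constrains the relationship between these two oriented resolutions, and the key step is extracting from it a parity statement on the two linking numbers. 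In parallel, Murasugi's signature inequality $|\sigma(L_+)-\sigma(L_-)|\le 2$ applied at each crossing flip, together with the same-sign nature of the clasp, forces the two signature jumps to be compatible. Combining these, one should rule out the pair $(\sigma(K), a_2(K)\bmod 2)=(0,1)$.

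The hardest step will be the final coupling between the signature jump and the parity of $a_2(K)$ in the clasp case; this is where the two classical invariants must be controlled simultaneously. If the direct linking-number bookkeeping proves difficult, a fallback is to track the Goeritz form of $D$ through the region crossing change: both $\sigma(K)$ and $\det(K)\bmod 8$, which detects the parity of $a_2(K)$, can be read from the Goeritz matrix, and a 2-region crossing change perturbs that matrix in a controlled low-rank way, turning the question into one about how these invariants of a symmetric bilinear form change under the perturbation.
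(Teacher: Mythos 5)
A preliminary remark: the paper does not prove Theorem \ref{KKSTheorem} at all — it quotes it from \cite{KKS} — so your proposal can only be measured against the known Kawauchi--Kishimoto--Shimizu argument, and on its own terms it has a genuine gap. The central error is the orientation-based case split. Whether a two-crossing region is a Reidemeister~2 bigon (one strand over at both crossings, crossings of opposite sign, region crossing change an isotopy) or a clasp (alternating over/under, crossings of equal sign, region crossing change genuinely changing the knot) is governed by the over/under pattern and is completely independent of whether the two strands run coherently or incoherently along the bigon. Your claim that the parallel sub-case forces opposite signs and an R2 bigon is false: in the closure of the braid $\sigma_1^3$ (the standard trefoil diagram; compare Figure \ref{figure:RegionChange} and Example \ref{example:torus}), the bigon between two consecutive crossings has coherently oriented strands, its two crossings have the same sign, and the region crossing change there produces the unknot. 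Indeed, if your parallel case were correct, your argument would show that $a_2(K)$ odd alone forces $\Reg(K)\geq 3$, contradicting $\Reg(3_1)=2$ with $a_2(3_1)=1$; the hypothesis $\sigma(K)=0$ must enter exactly in the coherently oriented clasp case, which you discard.

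Moreover, you have the difficulty located in the wrong sub-case, and the tools you list do not suffice for the true hard case. Running the skein relation through the two crossing changes gives $a_2(K)=\operatorname{lk}(S_1)+\operatorname{lk}(S_2)$, where $S_1,S_2$ are the two oriented smoothings. In the \emph{antiparallel} clasp case the leftover clasp crossing is a self-crossing of the smoothed link, the two linking numbers coincide, and $a_2(K)$ is even — a contradiction with $a_2(K)\equiv 1 \pmod 2$ with no signature input needed, so the case you call the main obstacle is actually the easy one. In the \emph{parallel} clasp case the leftover clasp crossing is an inter-component crossing, the two linking numbers are consecutive integers, and $a_2(K)$ comes out odd — perfectly consistent with the hypothesis — while Murasugi's bound $|\sigma(K_+)-\sigma(K_-)|\leq 2$ applied at each flip only forces the intermediate knot to have vanishing signature; nothing rules out $(\sigma(K),a_2(K)\bmod 2)=(0,1)$. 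What is missing is a finer statement coupling the signature jump at a crossing change to determinant or linking data, for instance that $\sigma(K_+)=\sigma(K_-)$ exactly when the signed determinants $\nabla_{K_\pm}(2i)$ have the same sign, combined with $\nabla_K(2i)\equiv 1-4a_2(K) \pmod 8$, or equivalently a Goeritz/Gordon--Litherland computation of how a full-twist (parallel clasp) change moves $\sigma$ and $\det$. Your closing sentence about tracking the Goeritz form points toward such an argument, but as written it is a vague fallback, not a proof, and the preceding case analysis would have to be rebuilt around the over/under dichotomy before it could be deployed.
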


Because $\MRI(K)=2$ if and only if $\Reg(K)=2$, it follows that if $\sigma(K)=0$ and $a_2(K)\equiv 1 \mod 2$, then $\MRI(K) \geq 3$.

\begin{example}
Theorems \ref{mg_2} and \ref{KKSTheorem} and Inequalities \ref{inequality:unknotting} and \ref{inequality:1} allow us to compute the multi-region index of 36 of the 84 nontrivial prime knots with nine or fewer crossings. In Figures \ref{figure:table1}, \ref{figure:table2}, and \ref{figure:table3}, we shade the regions where region crossing changes should be performed to obtain the unknot. If the multi-region index of a diagram is two, then it is automatically minimal. If the multi-region index of a diagram is greater than two and we can conclude it is minimal via Inequality \ref{inequality:unknotting}, that is if $u(K)=\MRI(K)$, then we mark that computation with a dagger $\dagger$. If the multi-region index of a diagram is greater than two and we can conclude it is minimal via Theorem \ref{mg_2}, that is if $mg_2(K) + 1 = \MRI(K)$, then we mark that computation with an asterisk $\ast$. If the multi-region index of a diagram is three and we can conclude it is minimal via Theorem \ref{KKSTheorem}, that is if $\sigma(K)=0$ and $a_2(K)\equiv 1 \mod 2$, then we mark that computation with a double dagger $\ddagger$. Computations of unknotting numbers, signatures, and Conway polynomials were obtained from KnotInfo \cite{KnotInfo}.
\end{example}

\begin{figure}
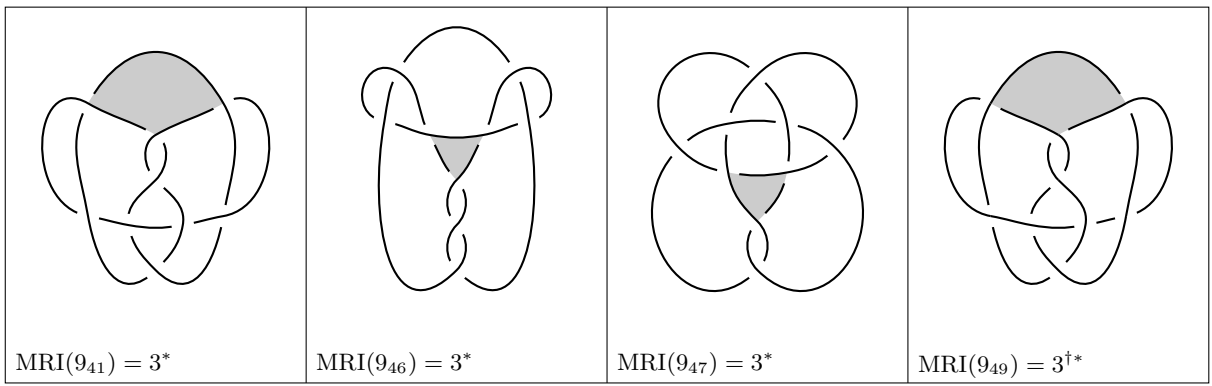

\[
\]
\caption{The multi-region index of small knots. If $u(K)=\MRI(K)$, then the entry is marked with $\dagger$. If $mg_2(K) + 1 = \MRI(K)$, then the entry is marked with $\ast$. If $\sigma(K)=0$, $a_2(K)\equiv 1 \mod 2$, and $\MRI(K)=3$, then the entry is marked with $\ddagger$.}
\label{figure:table3}
\end{figure}

In order to compute the region index and the multi-region index of more knots, it would be useful to have more lower bounds. One potential lower bound comes from the $n$-fold cyclic branched cover of $S^3$ along $K$. Let $mg_n(K)$ be the minimum number of generators of the first homology of the $n$-fold cyclic branched cover of $S^3$ along $K$. Wendt \cite{Wendt} proved that the unknotting number $u(K)$ has lower bound given by $\frac{mg_n(K)}{n-1}\leq u(K)$. \begin{question}
What are other lower bounds for the region index and multi-region index of a knot? In particular, does the inequality
\[\frac{mg_n(K)}{n-1}<\MRI(K)\]
hold for all knots $K$?
\end{question}

\newpage

\bibliography{mribib}{}
\bibliographystyle {amsalpha}

\end{document}